\documentclass[11pt,reqno]{amsart}

\usepackage[final]{graphicx}
\usepackage{amsfonts}
\usepackage{amsmath}
\usepackage{amssymb}
\usepackage{amsthm}
\usepackage{mathrsfs}
\usepackage[colorlinks=true,linkcolor=blue,citecolor=blue]{hyperref}
\usepackage{bbm}
\numberwithin{equation}{section}
\newtheorem{theorem}{Theorem}[section]
\newtheorem{lemma}[theorem]{Lemma}

\newtheorem{proposition}[theorem]{Proposition}
\newtheorem{remark}[theorem]{Remark}

\topmargin-.5in \textwidth6.6in \textheight9in \oddsidemargin0in
\evensidemargin0in
\vfuzz=2.03pt

\makeatletter\@namedef{subjclassname@2020}{\textup{2020} Mathematics Subject Classification}

\DeclareMathOperator{\im}{Im}

\newcommand{\R}{{\mathord{\mathbb R}}}
\newcommand{\C}{{\mathord{\mathbb C}}}

\newcommand{\N}{{\mathord{\mathbb N}}}

\newcommand{\Sph}{\mathbb{S}}

\renewcommand{\(}{\left(}
\renewcommand{\)}{\right)}

\newcommand{\be}[1]{\begin{equation}\label{#1}}
\newcommand{\ee}{\end{equation}}

\begin{document}

\title[Stability of Sobolev inequalitieson the Heisenberg group]{Asymptotically sharp stability of Sobolev inequalities on the Heisenberg group with  dimension-dependent constants}

\author{Lu Chen}
\address[Lu Chen]{Key Laboratory of Algebraic Lie Theory and Analysis of Ministry of Education, School of Mathematics and Statistics, Beijing Institute of Technology, Beijing
100081, PR China; MSU-BIT-SMBU Joint Research Center of Applied Mathematics, Shenzhen MSU-BIT
University, Shenzhen 518172, China}
\email{chenlu5818804@163.com}

\author{Guozhen Lu}
\address[Guozhen Lu]{Department of Mathematics, University of Connecticut, Storrs, CT 06269, USA}
\email{guozhen.lu@uconn.edu}

\author{Hanli Tang}
\address[Hanli Tang]{Laboratory of Mathematics and Complex Systems (Ministry of Education), School of Mathematical Sciences, Beijing Normal University, Beijing, 100875, China}
\email{hltang@bnu.edu.cn}

\author{Bohan Wang}
\address[Bohan Wang]{Key Laboratory of Algebraic Lie Theory and Analysis of Ministry of Education, School of Mathematics and Statistics, Beijing Institute of Technology, Beijing
100081, PR China}
\email{}

\address{}
\keywords{Stability of Sobolev inequality, Optimal asymptotic lower bound, Heisenberg group }
\thanks{The first author was partly supported by the National Key Research and Development Program (No.
2022YFA1006900) and National Natural Science Foundation of China (No. 12271027). The second author was partly supported by a Simons grant and a Simons Fellowship from the Simons Foundation.  The third author
was partly supported by National Natural Science Foundation of China (Grant No.12471100), the Fundamental Research Funds for the Central Universities(Grant No.2233300008)}


\begin{abstract}
In this paper, we are concerned with the optimal asymptotic lower bound for the stability of Sobolev inequality on the Heisenberg group. We first establish the optimal local stability of Sobolev inequality on the  CR sphere through bispherical harmonics and complicated orthogonality technique ( see Lemma 3.1). The loss of the P\"{o}lya-Szeg\"{o} inequality and the Riesz rearrangement inequality in the CR setting makes it impossible to use any rearrangement flow technique (either differential rearrangement flow or integral rearrangement flow) to derive the optimal stability of Sobolev inequality on the CR sphere from corresponding optimal local stability. To circumvent this, we will use the CR Yamabe flow to  establish the optimal stability of Sobolev inequality on the Heisenberg group with the dimension-dependent constants (see Theorem 1.1). As an application, we also establish the optimal stability of the Hardy-Littlewood-Sobolev  (HLS) inequality for special conformal index with the dimension-dependent constants (see Theorem 1.3).
Our approach is rearrangement-free and can be used to study the optimal stability problem for fractional Sobolev inequality or HLS inequality on the Heisenberg group once the corresponding continuous flow is established.
\end{abstract}

\maketitle

\section{Introduction and main results}\label{sec:intro}
We shall be concerned with the optimal stability of Sobolev inequalities on the Heisenberg group with dimension-dependent constants. The analogues on $\mathbb{R}^n$
have been established by Dolbeault, Esteban, Figalli, Frank and Loss \cite{DEFFL} for the first order Sobolev inequality and by Chen, Lu and Tang \cite{CLT1}, \cite{CLT2}, \cite{CLT3} for the higher  order and arbitrary fractional order Sobolev inequalities.\vskip0.1cm

The stability problem of Sobolev inequality was posed  by Brezis and Lieb. In \cite{BrLi} they asked if the following refined first order Sobolev inequality with a reminder term on $\mathbb{R}^n$
holds for some distance function $d$:
$$\left\|\nabla U \right\|_2^2 - \mathcal S_{1,n} \| U\|_{\frac{2n}{n-2}}^2\geq c d^{2}(U, \mathcal{M}_1),$$
where $\mathcal{M}_1$ is the set of extremal functions of the sharp first order Sobolev inequality. This question was answered affirmatively  in a pioneering work by Bianchi and Egnell \cite{BE}.  Stabilities of  fractional Sobolev inequality for $s=2$,  even positive integer $s<n/2$
and all fractional $0<s<n/2$  were established by the second author and
Wei \cite{LuWe}, by Bartsch, Weth and Willem \cite{BaWeWi}, and by Chen, Frank and Weth \cite{ChFrWe}
respectivelly. In fact, Chen, Frank and Weth  proved that there exits $C_{n,s}>0$ such that
\begin{equation}\label{Sob sta ine}
\left\|(-\Delta)^{s/2} U \right\|_2^2 - \mathcal S_{s,n} \| U\|_{\frac{2n}{n-2s}}^2\geq C_{n,s} d^{2}(U, \mathcal{M}_s),
\end{equation}
for all $U\in \dot H^s(\mathbb{R}^n)$, where $d(U,\mathcal{M}_s)=\min\{\|(-\Delta)^{s/2}(U-\phi)\|_{L^2}:\phi \in \mathcal{M}_s\}$ with $\mathcal{M}_s$ being the set of extremal functions of Sobolev inequalities and $\mathcal S_{s,n}$ being the sharp constant.

Initially, in these works  \cite{LuWe},  \cite{BaWeWi} and  \cite{ChFrWe}, the stability of Sobolev inequality was proved by establishing the local stability
of Sobolev inequality based on the spectrum analysis of elliptic or high order elliptic operator and using the Lions' concentration compactness technique to obtain global stability of Sobolev inequality. However, this method
does not tell us more information about the constant $C_{n,s}$ except that it is positive.
Recently, in the  work of Dolbeault, Esteban, Figalli, Frank and Loss in \cite{DEFFL}, they made the first important breakthrough. They proved

\vskip0.3cm
\textbf{Theorem A. }
\textit{There is an explicit constant} $\beta>0$ \textit{such that for all} $n\geq 3$ \textit{and for all} $U\in \dot H^1(\mathbb{R}^n)$, \textit{there holds}
$$\left\|\nabla U \right\|_2^2 - \mathcal S_{1,n} \| U\|_{\frac{2n}{n-2}}^2\geq \frac{\beta}{n}d^2(U,\mathcal{M}_1).$$
\vskip0.3cm

In  \cite{DEFFL}, they reduced the proof of the global stability to the local stability of the first order Sobolev inequality based on the competing symmetries and the continuous Steiner symmetrization inequality (rearrangement inequality), and at the same time they provided new  techniques to obtain  a refined  quantitative local stability. Furthermore, Chen, Lu and Tang in \cite{CLT1} developed the integral rearrangement flow technique and the refined  duality stability method, which was initially developed by Carlen \cite{Car1},  to establish the stability of Sobolev inequality for the arbitrary order with the explicit lower bound. However, the explicit lower bound was not  optimal there.
In \cite{CLT2}, the authors further established the optimal stability of fractional Sobolev inequality ($s\in (0,1)$) with the dimension-dependent and order-dependent constants using the Aronszajn-Smith formula and an important deficit-split inequality. However, the optimal stability of Sobolev inequality of  arbitrary order (particularly $1<s<\frac{n}{2}$) was not solved. In the more recent paper \cite{CLT3}, Chen, Lu and Tang proposed the weak-decomposition-strong-stability technique and then applied improved integral rearrangement flow technique to directly establish the optimal stability of HLS inequality without using any duality argument. Applying the dual stability into HLS inequality improved in \cite{CLT1}, they obtained the optimal stability of Sobolev inequality for the arbitrary order. In summary, the results of \cite{CLT2, CLT3} about the optimal stability of fractional Sobolev inequality can be stated as follows:

\vskip0.3cm
\textbf{Theorem B }
\textit{For any fixed} $s\in(0,n/2)$, \textit{there exist a positive constant} $\beta_s$ \textit{such that for any} $f\in  \dot H^s(\mathbb{R}^n)$, \textit{there holds}
$$\left\| (-\Delta)^{s/2} f \right\|_2^2-\mathcal S_{s,n} \|f\|_{\frac{2n}{n-2s}}^2\geq \frac{\beta_{s}}{n} \inf_{h\in\mathcal{ M}_s}\|(-\Delta)^{s/2}(f-h)\|_2^2.$$
Furthermore, when $s\rightarrow 0$, $\beta_s=O(s)$.
\vskip0.3cm

We also mention that
Dolbeault, Esteban, Figalli, Frank and Loss \cite{DEFFL} established the optimal optimal stability of Gross-type Log-Sobolev inequality in $\mathbb{R}^n$ \cite{Gross}  through dimension-limit method as the dimension goes to infinity and Chen, Lu, Tang \cite{CLT2} established the optimal stability of Beckner's Log-Sobolev inequality
 in $\mathbb{S}^n$ obtained  in \cite{Be1992} through order-limit method as the order goes to zero.
This sharpens the local stability of Log-Sobolev inequality in $\mathbb{S}^n$ obtained earlier in \cite{CLT}.

\medskip

However, all of the works by  Dolbeault, Esteban, Figalli, Frank and Loss \cite{DEFFL} and by Chen, Lu and Tang \cite{CLT1, CLT2, CLT3} depend on the rearrangement inequality on $\mathbb{R}^n$. More precisely, in \cite{DEFFL} the Steiner symmetrization inequality (Poly\'{a}-Szeg\"{o})  for the $L^2$ integral of the gradient of $u$ is crucial while in \cite{CLT2}, \cite{CLT3} the Riesz rearrangement inequality for the HLS integral is also important. As we know, these rearrangement inequalities are absent on the Heisenberg group, which is the main difficulty we need to overcome in the paper.
\medskip

The Heisenberg group $\mathbb{H}^n$ is $\C^n\times\R$ with elements $u=(z,t)$ and group law
$$
u u' = (z,t) (z',t') = (z+z', t+t'+2\im z\cdot\overline{z'}) \,.
$$
Here we have set $z\cdot\overline{z'} = \sum_{j=1}^n z_j \overline{z_j'}$. Haar measure on $\mathbb{H}^n$ is the usual Lebesgue measure $du=dz\,dt$. (To be more precise, $dz=dx\,dy$ if $z=x+iy$ with $x,y\in\R^n$.) We write $\delta u = (\delta z,\delta^2 t)$ for dilations of a point $u=(z,t)$ and denote the homogeneous norm on $\mathbb{H}^n$ by
$$
| u | = |(z,t)| = (|z|^4+t^2)^{1/4} \,.
$$
As usual, we denote the homogeneous dimension by $Q:=2n+2$.
The canonical subelliptic Laplacian on $\mathbb{H}^n$ is the second-order differential operator defined as
$$\Delta_{\mathbb{H}^n}=\sum\limits_{j=1}^n(X_j^2+Y^2_j),$$
where
\[X_j=\frac{\partial}{\partial x_j}+2y_j\frac{\partial}{\partial t}, \quad Y_j=\frac{\partial}{\partial y_j}-2x_j\frac{\partial}{\partial t},
\]
for $j=\{1,...,n\}$. Denote $|\nabla_{\mathbb{H}^n}f|$  the norm of the subelliptic gradient of the function $f:\mathbb{H}^n\rightarrow \mathbb{R}:$
$$|\nabla_{\mathbb{H}^n}f|=\left(\sum_{j=1}^n(X_jf)^2+(Y_jf)^2\right)^{1/2}.$$ We also denote the Sobolev (or folland-Stein) space $S^{1}(\mathbb{H}^n)$ by the completion of $C_0^\infty(\mathbb{H}^n)$ under the norm $\|\nabla_{\mathbb{H}^n}f\|_2$.
\vskip0.1cm

Sobolev inequalities on the Heisenberg group was first studied by Folland and Stein \cite{FS}, while Jerison and Lee \cite{JeLe} proved the following celebrated sharp Sobolev inequality on the Heisenberg group.

\vskip0.3cm
\textbf{Theorem B. }
\textit{The Sobolev inequality on the Heisenberg group states}
\begin{equation}
 \label{sobol1}
 \int_{\mathbb{H}^{n}} |\nabla_{\mathbb{H}^n}f|^2 dzdt
\geq \frac{4\pi n^2 }{(2^{2n} n!)^{1/(n+1)} } \|f\|_{\frac{2Q}{Q-2}}^2,
\end{equation}
  \textit{where the constant} $\frac{4\pi n^2 }{(2^{2n} n!)^{1/(n+1)} }$ \textit{is sharp. The equality holds if and only if}
$$f(z,t)=K|w+z\cdot \mu+\lambda|^{-n}$$
\textit{where} $w=t+i|z|^2$, $K,\lambda\in \mathbb{C}$, ${\rm Im} \lambda >\frac{|\mu|^2}{4}$ \textit{and} $\mu\in\mathbb{C}^n$.

\vskip0.3cm
The Cayley transform on $\mathbb{H}^n$, a generalization of stereographic transform on Euclidean space $\mathbb{R}^n$, $\mathcal{C}: \mathbb{H}^n\rightarrow \mathbb{S}^{2n+1} \setminus \{O\}$ with $O$ being the south pole $(0,\ldots,0,-1)$, is defined by
 \[u=(z,t)\mapsto \xi=(\xi',\xi_{n+1})=\left(\frac{2z}{1+|z|^2-it},\frac{1-|z|^2+it}{1+|z|^2-it}\right),\]
  and its inverse $\mathcal{C}^{-1}:\mathbb{S}^{2n+1}\rightarrow \mathbb{H}^{n} $ is given by
  \[\mathcal{C}^{-1}(\xi)=(\xi',\xi_{n+1})=\left(\frac{\xi_1}{1+\xi_{n+1}},...,\frac{\xi_{n}}{1+\xi_{n+1}},{\rm Im}\frac{1-\xi_{n+1}}{1+\xi_{n+1}}\right),\]
   where $ \mathbb{S}^{2n+1}:=\{\xi=(\xi_1,...,\xi_{n+1}): \sum\limits_{i=1}^{n+1}|\xi_i|^2=1 \}$ can be considered as
    the complex sphere of  $\mathbb C^{n+1}$ (CR sphere). The Jacobian of the Cayley transform is
 \[|J_\mathcal{C}|=2^{Q-1}((1+|z|^2)^2+|t|^2)^{-Q/2}=2^{-1}|1+\xi_{n+1}|^Q.\]
Via the Cayley transform, there is an equivalent version of Sobolev inequality on CR sphere $\mathbb{S}^{2n+1}$. In order to state this inequality, let us introduce some notations first.
\vskip0.1cm

For $j=1,\ldots,n+1$, define the operators
$$
T_j := \frac{\partial}{\partial \xi_j}-\overline{\xi_j} \sum_{k=1}^{n+1} \xi_k \frac{\partial}{\partial \xi_k}  \,,
\qquad
\overline{T_j} := \frac{\partial}{\partial \overline{\xi_j}}-\xi_j \sum_{k=1}^{n+1} \overline{\xi_k} \frac{\partial}{\partial \overline{\xi_k}} \,,
$$
The conformal sublaplacian is defined to be
$$
\mathcal L := -\frac12 \sum_{j=1}^{n+1} \left(\overline{T_j}T_j + T_j \overline{T_j}\right) + \frac{n^2}4 =\mathcal{L}'+\frac{n^2}{4}.
$$
The associated quadratic form is
\begin{equation}
 \label{eq:esph}
\mathcal E[u] := \frac12 \int_{\Sph^{2n+1}} \left( \sum_{j=1}^{n+1} \left( |T_j u|^2 + |\overline{T_j} u|^2 \right) + \frac{n^2}2 |u|^2 \right) \,d\xi \,,
\end{equation}
where $d\xi$ denotes the surface measure on the sphere $\mathbb{S}^{2n+1}$.
We define the Sobolev space $S^1(\Sph^{2n+1}):=\{u(\xi):\mathcal E[u(\xi)]<+\infty,~~~~~~\xi\in\mathbb{S}^{2n+1} \}$.  By the Cayley transform, the Sobolev inequality \eqref{sobol1} on the Heisenberg group $\mathbb{H}^n$ is equivalent to the Sobolev inequality on the CR sphere $\mathbb{S}^{2n+1}$, i.e.
\begin{equation}
 \label{eqsobq}
\mathcal E[u]
\geq (\frac{Q-2}{4})^2|\mathbb{S}^{2n+1}|^{2/Q} \left( \int_{\Sph^{2n+1}} |u|^{\frac{2Q}{Q-2}} \,d\xi
\right)^{\frac{Q-2}{Q}} ,
\end{equation}
for any $u\in S^1(\Sph^{2n+1})$.
Furthermore, the equality holds if and only if
$$u(\xi)=\frac{c}{|1-\overline{\eta}\cdot\xi|^{\frac{Q-2}{2}}}$$
for some $c\in \mathbb{C}$, $\eta=(\eta_1,...,\eta_{n+1})\in \mathbb{C}^{n+1}$ with $|\eta| <1$.
\vskip0.3cm

The sharp Sobolev inequality on CR sphere $\mathbb{S}^{2n+1}$ was also obtained by Frank and Lieb in \cite{Frank-Lieb} by a different proof from Jerison and Lee's result in \cite{JeLe} . In the same paper, Frank and Lieb also obtained the sharp fractional Sobolev inequality on the Heisenberg group by establishing the sharp Hardy-Littlewood-Sobolev inequality on the Heisenberg group and using a duality argument. In fact, Frank and Lieb proved in their remarkable work \cite{Frank-Lieb}

\vskip0.3cm
\textbf{Theorem C. }
\textit{Let~~}$0<\lambda<Q$, \text{then for any} $f\in L^{\frac{2Q}{Q-\lambda}}(\mathbb{H}^n)$
\begin{equation}
\left|\iint_{\mathbb{H}^n\times \mathbb{H}^n}\frac{\bar{f}(u)f(v)}{|u^{-1}v|^\lambda}du dv \right|\leq \left(\frac{\pi^{n+1}}{2^{n-1}n!}\right)^{\lambda/Q}\frac{n!\Gamma(\frac{Q-\lambda}{2})}{\Gamma^2(\frac{2Q-\lambda}{4}
)}\|f\|^2_{L^{\frac{2Q}{2Q-\lambda}}(\mathbb{H}^n)},
\end{equation}
 \textit{with equality holds if and only if}
$$f(u)=cF(\delta(a^{-1}u))$$
\textit{for some} $c\in \mathbb{C},~~\delta>0$, $a\in\mathbb{H}^n$ \textit{and} $F$ \textit{is the function with the form}
$$F(u)=((1+|z|^2)^2+t^2)^{-(2Q-\lambda)/4}.$$
\vskip0.3cm
We also mention that Beckner \cite{Be1997} obtained the sharp Stein-Weiss inequality with the horizontal weight $|z|$ in some special index by identifying the best constants and extremal functions.
Han, Lu and Zhu \cite{HLZ} established the Stein-Weiss inequality on the Heisenberg group with both of full weight $|(z,t)|$ and horizontal weight $|z|$, while
Chen, Lu and Tao \cite{CLTao} established the corresponding existence of extremals using double-weighted Lions concentration-compactness principle.
\medskip

The bispherical harmonics is an important tool to study the Sobolev and Hardy-Littlewood-Sobolev inequalities on the CR sphere $\mathbb{S}^{2n+1}$. Let us denote $\mathcal H_{j,k}$ the space of restrictions to $\Sph^{2n+1}$ of harmonic polynomials $p(z,\overline z)$ on $\C^{n+1}$ which are homogeneous of degree $j$ in $z$ and degree $k$ in $\overline z$; see \cite{Fo} and references therein. It is well known that the  conformal sublaplacian $\mathcal{L}$ acts as multiplication on  $\mathcal H_{j,k}$ by

\begin{equation}\label{eig}
\lambda_{j,k}= \frac{\Gamma(j+\frac{Q+2}{4})\Gamma(k+\frac{Q+2}{4})}{\Gamma(j+\frac{Q-2}{4})\Gamma(k+\frac{Q-2}{4})}= (\frac{Q-2}{4}+k)(\frac{Q-2}{4}+j).
\end{equation}

The stability of Sobolev inequalities on the Heisenberg group has been established by  Loiudice \cite{Lo}, by considering the  equivalence between  eigenvalue problems on the Heisenberg group and the CR sphere as observed in Malchiodi and Uguzzoni \cite{MU}. Loiudice proved in \cite{Lo} that there exists a  positive constant $C_Q$ such that
\begin{equation}\label{sob sta H}
\int_{\mathbb{H}^{n}} |\nabla_{\mathbb{H}^n}f|^2 dzdt
-\frac{4\pi n^2 }{(2^{2n} n!)^{1/(n+1)} } \|f\|_{\frac{2Q}{Q-2}}^2\geq C_Q d^2(f,\mathcal{M}),
\end{equation}
where
$$d^2(f,\mathcal{M})=\inf_{h\in \mathcal{M}}\|\nabla_{\mathbb{H}^n}(f-h)\|^2_2$$
and $\mathcal{M}$ is the real-valued extremal functions of the Sobolev inequality on the Heisenberg group, i.e.
$$\mathcal{M}=\{cH(\delta(a^{-1}u)), c\in \mathbb{R},~~\delta>0,~~a\in \mathbb{H}^n\}$$
with $H(u)=\left((1+|z|^2)^2+t^2\right)^{-\frac{Q-2}{4}}$. Later, Liu and Zhang \cite{LiZh} proved the the stability of fractional Sobolev inequalities on the Heisenberg group. The strategy adopted in \cite{Lo}  is similar to that used in \cite{BE} while the  method used in \cite{LiZh} is similar to that in \cite{ChFrWe}. Nevertheless, we still know nothing about the constant $C_Q$ except knowing that it is positive.
\medskip

The main goal of the paper is to establish the following stability of Sobolev inequalities on the Heisenberg group with optimal dimension-dependent constants for real-valued functions.

\begin{theorem}\label{sob stability}
There exists a constant $\beta_0>0$ independent of $Q$ such that for any $f\in S^1(\mathbb{H}^{n})$, there holds
\begin{equation*}
\int_{\mathbb{H}^{n}} |\nabla_{\mathbb{H}^n}f|^2 dzdt
-\frac{4\pi n^2 }{(2^{2n} n!)^{1/(n+1)} } \|f\|_{\frac{2Q}{Q-2}}^2\geq \frac{\beta_0}{Q} d(f,\mathcal{M})^{2}.
\end{equation*}
\end{theorem}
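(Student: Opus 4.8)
The plan is to transfer the problem to the CR sphere via the Cayley transform and then run the CR Yamabe flow, following the strategy that Dolbeault--Esteban--Figalli--Frank--Loss used for $\mathbb{R}^n$ with the ordinary fast-diffusion flow, but replacing the rearrangement step (which is unavailable on the Heisenberg group) by a flow-monotonicity argument. First I would reduce to the sphere: by the conformal equivalence \eqref{eqsobq}, the deficit $\int_{\h}|\nabla_{\h^n}f|^2 - \mathcal S\|f\|_{2Q/(Q-2)}^2$ equals (up to the fixed Jacobian normalization) the deficit $\mathcal E[u] - (\frac{Q-2}{4})^2|\s|^{2/Q}\|u\|_{2Q/(Q-2)}^2$ for the corresponding $u$ on $\s$, and the distance $d(f,\m)^2$ likewise transforms into the analogous distance to the manifold of extremals $\widetilde{\m}$ on the sphere (the functions $c|1-\bar\eta\cdot\xi|^{-(Q-2)/2}$). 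So it suffices to prove $\mathcal E[u]-(\frac{Q-2}{4})^2|\s|^{2/Q}\|u\|_{2Q/(Q-2)}^2\ge \frac{\beta_0}{Q}\,\widetilde d(u,\widetilde{\m})^2$ with $\beta_0$ independent of $Q$.

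Next, I would invoke the local stability result (Lemma~3.1 of the paper, which I may assume): there is a neighborhood of $\widetilde{\m}$ in which the deficit dominates $\frac{c}{Q}\widetilde d(u,\widetilde{\m})^2$, with a constant $c>0$ independent of $Q$ — this is exactly what the bispherical-harmonics spectral analysis, using the eigenvalues $\lambda_{j,k}$ from \eqref{eig} and the orthogonality technique, is designed to give. The remaining task is the global-to-local reduction, and here the CR Yamabe flow enters. Starting from an arbitrary $u\in S^1(\s)$ (normalized suitably, say $\|u\|_{2Q/(Q-2)}=1$), one runs the CR Yamabe flow $\partial_t u = -u^{-1}\big(\mathcal L u - \mu(t)\,u^{(Q+2)/(Q-2)}\big)$ (or the equivalent equation for $u^{(Q-2)/(Q+2)}$), where $\mu(t)$ keeps the $L^{2Q/(Q-2)}$ norm fixed; this flow converges to an extremal in $\widetilde{\m}$, the Sobolev quotient $\mathcal E[u(t)]/\|u(t)\|_{2Q/(Q-2)}^2$ is nonincreasing along the flow, and one controls how far $u(t)$ moves: $\int_0^\infty \|\partial_t u\|\,dt$ — or more precisely the distance traveled in the relevant norm — is bounded by a constant times the deficit. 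Concretely one wants an inequality of the form
\begin{equation*}
-\frac{d}{dt}\Big(\text{deficit of }u(t)\Big) \ge \kappa(Q)\,\|\partial_t u(t)\|^2_{*},
\end{equation*}
together with a control $\widetilde d(u,\widetilde{\m})\le C\big(\widetilde d(u(0),\widetilde{\m})\text{ at entry time}\big) + \int_0^{T_{\mathrm{ent}}}\|\partial_t u\|_*\,dt$, so that either the flow enters the local neighborhood quickly (and we use Lemma~3.1 plus the fact that the deficit only decreased along the way) or the deficit was already bounded below by a fixed amount (and then the trivial bound $\widetilde d(u,\widetilde{\m})^2\le \mathcal E[u]\le C\cdot\text{deficit}$-type estimate suffices, since the distance is always bounded by the energy after subtracting the best extremal). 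Carefully tracking the $Q$-dependence through the CR Yamabe flow estimates so that the final constant is $\beta_0/Q$ with $\beta_0$ independent of $Q$ is the crux.

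The main obstacle I anticipate is establishing the needed monotonicity and convergence properties of the CR Yamabe flow with \emph{explicit, dimensionally-tracked} constants. On $\mathbb{R}^n$ the analogous estimates rely on the fast-diffusion equation's nice structure and on rearrangement to stay in a compact-after-normalization family; on $\s$ the CR Yamabe flow is a degenerate parabolic equation (the subelliptic operator $\mathcal L$ has no ellipticity transverse to the contact distribution), so proving global existence, uniform bounds, exponential convergence to an extremal, and — critically — that the distance traveled is controlled by the square root of the deficit uniformly in $Q$, is delicate. A secondary difficulty is matching the normalizations between $\h^n$ and $\s$: the Jacobian factor $|J_{\mathcal C}|$, the sphere volume $|\s|^{2/Q}$, and the passage between the $S^1(\h^n)$-distance and the $\mathcal E$-distance must all be reconciled so that no hidden $Q$-dependent loss is introduced. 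Once these flow estimates are in hand, the assembly — reduce to sphere, dichotomy on whether the flow enters the local regime, apply Lemma~3.1 with its $1/Q$ constant, conclude — is routine, and yields Theorem~\ref{sob stability} with $\beta_0$ independent of $Q$. Finally, pulling back through the Cayley transform gives the stated inequality on $\h^n$ verbatim.
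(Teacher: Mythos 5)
Your overall architecture (Cayley transform, local stability from bispherical harmonics, CR Yamabe flow for the global-to-local reduction) matches the paper, but the way you propose to use the flow contains a genuine gap. You ask for a quantitative estimate of the form $-\frac{d}{dt}(\mathrm{deficit})\ge\kappa(Q)\|\partial_t u\|_*^2$ together with a bound on the total distance traveled by the flow in terms of the square root of the deficit, uniformly in $Q$, so as to compare $\widetilde d(u_0,\widetilde{\mathcal M})$ with $\widetilde d(u(T_{\rm ent}),\widetilde{\mathcal M})$. Such \L ojasiewicz-type estimates for the CR Yamabe flow, with dimension-tracked constants, are not available in the literature and would be a substantial result in their own right; you correctly flag this as ``the crux'' but do not supply it, and without it your dichotomy fails: knowing that the deficit only decreased up to the entry time does not bound $d(u_0,\mathcal M)$ by $d(u(T_{\rm ent}),\mathcal M)$ plus anything controllable. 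The paper avoids this entirely by a stopping-time argument on the \emph{relative} deficit: since the flow keeps $\|u(t)\|_{2Q/(Q-2)}$ fixed and makes $\mathcal E[u(t)]$ nonincreasing, the ratio $\big(\mathcal E[u(t)]-c\big)/\mathcal E[u(t)]$ with $c=(\tfrac{Q-2}{4})^2|\mathbb S^{2n+1}|^{2/Q}\|u_0\|^2_{2Q/(Q-2)}$ is nonincreasing in $t$; one stops at the first time $t_0$ where $\inf_g\mathcal E[u(t_0)-g]=\delta\,\mathcal E[u(t_0)]$ (which exists because the flow converges to an extremal, so this ratio tends continuously to $0$), applies the local stability there, and chains
\begin{equation*}
\frac{\mathrm{deficit}(u_0)}{\inf_g\mathcal E[u_0-g]}\;\ge\;\frac{\mathrm{deficit}(u_0)}{\mathcal E[u_0]}\;\ge\;\frac{\mathrm{deficit}(u(t_0))}{\mathcal E[u(t_0)]}\;=\;\delta\,\frac{\mathrm{deficit}(u(t_0))}{\inf_g\mathcal E[u(t_0)-g]}\;\ge\;\delta\,\nu(\delta).
\end{equation*}
Only the exact monotonicity identities for $S_{\theta(t)}$ and $V_{\theta(t)}$ and the qualitative convergence of the flow (Ho's theorem) are needed; no rate, no control of the distance traveled, and no $Q$-dependence enters through the flow.

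A secondary omission: the flow argument (and the local stability lemma) only applies to nonnegative $u$, since one needs $\theta(t)=u(t)^{2/n}\theta_0$ to be a contact form. Your proposal never reduces general real-valued $f$ to the nonnegative case. The paper does this separately (Proposition 2.3), splitting $u=u_+-u_-$, using $\mathcal E[u]=\mathcal E[u_+]+\mathcal E[u_-]$ and the strict subadditivity of $m\mapsto(1-m)^{2/2^*}+m^{2/2^*}$ to show $C_{\rm BE}\ge\min\{\tfrac12 C_{\rm BE}^{\rm pos},\,1-2^{-2/Q}\}$, and noting $1-2^{-2/Q}\sim (2\log 2)/Q$ so that the $1/Q$ rate survives. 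Without some such step your argument proves the theorem only for nonnegative $f$.
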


\begin{remark}
 From Liu and Zhang's work \cite{LiZh} we know that the best constant of stability of Sobolev inequalities (\ref{sob sta H}) satisfies
$$C_Q\leq \frac{4}{Q+6}.$$
So $1/Q$ is optimal as $Q\rightarrow \infty$.
\end{remark}

Using the stability of Sobolev inequalities on the Heisenberg group and the dual stability method established by Carlen (see \cite{Car1}, \cite{Car2}), we can obtain the stability of Hardy-Littlewood-Sobolev inequalities
on the Heisenberg group for the specific index $\lambda=Q-2$.

Denote by $S_{Q}=\left(\frac{2 \pi^{n+1}}{n!}\right)^{\frac{Q-2}{Q}}\frac{n!}{\Gamma^2((Q+2)/4)}$ the best constant of the HLS inequality and denote by $\mathcal{M}_{HLS}$ the real-valued extremal functions of the HLS inequality on the Heisenberg group when $\lambda=Q-2$, i.e.
$$\mathcal{M}_{HLS}=\{cF(\delta(a^{-1}u)), c\in \mathbb{R},~~\delta>0,~~a\in \mathbb{H}^n\}$$
with $F(u)=\left((1+|z|^2)^2+t^2\right)^{-\frac{Q+2}{4}}$. Then the following stability of HLS inequalities on the Heisenberg group with the optimal dimension-dependent constants holds.

\begin{theorem}\label{HLS stability}
There exists a positive $\alpha$ independent of $Q$ such that
\begin{equation*}
\|f\|^2_{L^{\frac{2Q}{Q+2}}(\mathbb{H}^n)}-S_Q^{-1}\iint_{\mathbb{H}^n\times \mathbb{H}^n}\frac{f(u)f(v)}{|u^{-1}v|^{Q-2}}du dv \geq \frac{\alpha}{Q} d^2(f,\mathcal{M}_{HLS}),
\end{equation*}
where
$$d(f,\mathcal{M}_{HLS})=\inf_{h\in \mathcal{M}_{HLS}}\|f-h\|_{L^{\frac{2Q}{Q+2}}(\mathbb{H}^n)}.$$
\end{theorem}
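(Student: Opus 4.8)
The plan is to deduce Theorem~\ref{HLS stability} from the Sobolev stability of Theorem~\ref{sob stability} by a duality argument of Carlen type, but carrying along the Fenchel--Young remainder, which is what will let me recover stability in the genuine $L^{2Q/(Q+2)}$-distance rather than in the weaker ``dual'' distance that the bare duality argument produces.

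First I would set up the dictionary. Write $q=\tfrac{2Q}{Q+2}$, $p=\tfrac{2Q}{Q-2}$, and let $\mathcal S=\tfrac{4\pi n^2}{(2^{2n}n!)^{1/(n+1)}}$ be the Jerison--Lee constant. Since $|u^{-1}v|^{-(Q-2)}$ is a fixed positive multiple $c_Q$ of the fundamental solution of $-\Delta_{\mathbb H^n}$, for $f\in L^q(\mathbb H^n)$ the function $w:=(-\Delta_{\mathbb H^n})^{-1}f$ lies in $S^1(\mathbb H^n)$ and $\iint\frac{f(u)f(v)}{|u^{-1}v|^{Q-2}}\,du\,dv=c_Q\langle f,w\rangle=c_Q\|\nabla_{\mathbb H^n}w\|_2^2$; the duality between the sharp Sobolev and HLS inequalities forces $S_Q=c_Q\mathcal S^{-1}$, so the HLS deficit is $\boldsymbol\delta(f):=\|f\|_q^2-S_Q^{-1}\iint(\cdots)=\|f\|_q^2-\mathcal S\|\nabla_{\mathbb H^n}w\|_2^2$. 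I would also record two structural facts: $-\Delta_{\mathbb H^n}$ maps $\mathcal M$ onto a dilation of $\mathcal M_{HLS}$, and the $L^q$-gradient $\Phi(v):=\tfrac12\|v\|_p^{2-p}|v|^{p-2}v$ of $\tfrac14\|\cdot\|_p^2$ maps $\mathcal M$ onto $\mathcal M_{HLS}$, because $H^{\,p-1}=H^{(Q+2)/(Q-2)}=F$.

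Then I would put $u_H:=2\mathcal S\,w$; this is the maximiser of $v\mapsto\langle f,v\rangle-\tfrac1{4\mathcal S}\|\nabla_{\mathbb H^n}v\|_2^2$, so $\langle f,u_H\rangle=\tfrac1{2\mathcal S}\|\nabla_{\mathbb H^n}u_H\|_2^2$, and feeding this into the Fenchel--Young inequality $\|f\|_q^2+\tfrac14\|u_H\|_p^2\ge\langle f,u_H\rangle$ produces the exact identity
\[
\boldsymbol\delta(f)=\mathcal Y(f,u_H)+\tfrac1{4\mathcal S}\,\delta_{\mathrm{Sob}}(u_H),\qquad
\mathcal Y(f,v):=\|f\|_q^2+\tfrac14\|v\|_p^2-\langle f,v\rangle\ \ge\ 0,
\]
where $\delta_{\mathrm{Sob}}(u)=\|\nabla_{\mathbb H^n}u\|_2^2-\mathcal S\|u\|_p^2$ and, crucially, $\mathcal Y(f,u_H)$ is precisely the Bregman divergence of $\|\cdot\|_q^2$ between $f$ and $\Phi(u_H)$. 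Using that both summands are nonnegative and applying Theorem~\ref{sob stability} to $u_H$, I get $\boldsymbol\delta(f)\ge\mathcal Y(f,u_H)$ and $\mathcal S^{-1}d(u_H,\mathcal M)^2\le\mathcal S^{-1}\tfrac{Q}{\beta_0}\delta_{\mathrm{Sob}}(u_H)\le\tfrac{4Q}{\beta_0}\boldsymbol\delta(f)$, the factor $\mathcal S$ conveniently cancelling. Choosing $h_\ast\in\mathcal M$ with $d(u_H,\mathcal M)=\|\nabla_{\mathbb H^n}(u_H-h_\ast)\|_2$, so that $\Phi(h_\ast)\in\mathcal M_{HLS}$, I would then estimate $d(f,\mathcal M_{HLS})\le\|f-\Phi(u_H)\|_q+\|\Phi(u_H)-\Phi(h_\ast)\|_q$ and finish with (i) the $2$-uniform convexity of $\|\cdot\|_q^2$ on $L^q$ for $1<q\le2$, whose modulus is $\gtrsim q-1$, giving $\|f-\Phi(u_H)\|_q^2\lesssim(q-1)^{-1}\mathcal Y(f,u_H)\le(q-1)^{-1}\boldsymbol\delta(f)$, and (ii) a short computation of the differential of $\Phi$ showing it is Lipschitz from $L^p$ to $L^q$ with constant $\le p-\tfrac32$, giving $\|\Phi(u_H)-\Phi(h_\ast)\|_q\lesssim\|u_H-h_\ast\|_p\le\mathcal S^{-1/2}d(u_H,\mathcal M)$. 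Squaring and combining with the previous bounds yields $d(f,\mathcal M_{HLS})^2\lesssim\bigl((q-1)^{-1}+\tfrac{Q}{\beta_0}(p-\tfrac32)^2\bigr)\boldsymbol\delta(f)$; since $q\in[\tfrac43,2)$ and $p\in(2,4]$ for every admissible $Q$, the bracket is $\le C_0Q/\beta_0$ with $C_0$ absolute, which is Theorem~\ref{HLS stability} with $\alpha=\beta_0/C_0$.

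The main obstacle is not the soft duality but getting the right distance out of it: the naive Carlen bound, which discards $\mathcal Y(f,u_H)$, only delivers $\boldsymbol\delta(f)\ge\tfrac{\beta_0}{4\mathcal S Q}\inf_{g\in\mathcal M_{HLS}}\langle f-g,(-\Delta_{\mathbb H^n})^{-1}(f-g)\rangle$, i.e.\ stability in a norm strictly weaker than $L^q$ — by the HLS inequality this dual quantity is $\le\mathcal S^{-1}d(f,\mathcal M_{HLS})^2$, exactly the wrong direction — so the genuine $L^q$ statement cannot follow from the duality alone. Retaining the Fenchel--Young remainder, converting it via the uniform convexity of $L^q$, and matching it to $\mathcal M_{HLS}$ through the $L^p\!\to\!L^q$ Lipschitz property of $\Phi$, is precisely the device that upgrades to the correct norm. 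The remaining care is to verify that the convexity modulus of $\|\cdot\|_q^2$, the Lipschitz constant of $\Phi$, and the cancellation of $\mathcal S$ all keep the final constant independent of $Q$ — which works because $q=\tfrac{2Q}{Q+2}$ and $p=\tfrac{2Q}{Q-2}$ stay in a fixed compact subinterval of $(1,\infty)$ (and converge to $2$) as $Q\to\infty$; the facts that $w\in S^1(\mathbb H^n)$ and that the kernel is the Green function are standard Folland--Stein theory.
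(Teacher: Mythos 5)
Your proposal is correct and follows essentially the same route as the paper: both deduce the HLS stability from the Sobolev stability of Theorem \ref{sob stability} via Carlen's dual stability method, with the $(q-1)$-uniform convexity of $\|\cdot\|_q^2$ ($1<q<2$) supplying exactly the upgrade from the weak dual ($H^{-1}$-type) distance to the genuine $L^{2Q/(Q+2)}$ distance that you correctly identify as the crux. The differences are only in packaging: the paper transfers to the CR sphere and invokes Theorem 2.4 of \cite{Car2} as a black box, whereas you work directly on $\mathbb{H}^n$ and reprove that transfer by hand (Fenchel--Young remainder, Bregman divergence, and the $L^p\to L^q$ Lipschitz bound for the duality map $\Phi$), reaching the same $1/Q$ dependence.
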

\begin{remark}
We would like to point out that the lower bound for the stability of the
Hardy-Littlewood-Sobolev inequality on the Heisenberg group in Theorem \ref{HLS stability} is asymptotically optimal when $Q\rightarrow \infty$ for the same reason as the analogues
on $\mathbb{R}^n$ (See Remark 1.5 in \cite{CLT2}).
\end{remark}

Let us give a brief overview over the main ideas of the proof of Theorem 1.1. By Cayley transform, we only need to establish the following stability of the Sobolev inequality on CR sphere $\mathbb{S}^{2n+1}$ with the optimal asymptotic lower bound
\begin{equation*}
 \label{stability}
\mathcal E[u]
-(\frac{Q-2}{4})^2|\mathbb{S}^{2n+1}|^{\frac{2}{Q}}  \left( \int_{\Sph^{2n+1}} |u|^{\frac{2Q}{Q-2}} \,d\xi \right)^{\frac{Q-2}{Q}}\geq \frac{\beta_0}{Q}\inf_{h\in \mathcal{M}_\ast} \mathcal E[u-h],
\end{equation*}
where $$\mathcal{M}_{*}=\left\{\frac{c}{|1-\overline{\eta}\cdot\xi|^{\frac{Q-2}{2}}}:~~c\in \mathbb{R},~~\eta=(\eta_1,...,\eta_{n+1})\in \mathbb{C}^{n+1}\text{with}~~|\eta| <1\right\}.$$
We first set up the local stability of the Sobolev inequality in $\mathbb{S}^{2n+1}$ with the optimal asymptotic lower bound, where we use the techniques introduced  in \cite{DEFFL}. They use `cutting' at various height and estimates by spherical harmonics on sphere $\mathbb{S}^n$, while we do the similar things on CR sphere $\mathbb{S}^{2n+1}$ although  calculations here are more complex.  Then we reduce the global stability to local stability. In Dolbeault, Esteban, Figalli, Frank and Loss's work, they introduced a flow, based on competing symmetries of Carlen and Loss  \cite{CaLo} and a continuous Steiner symmetrization,
to reduce the global stability to local stability. However,  because of the loss of symmetrization  in the setting of Heisenberg group, we can't adopt their argument. Coming back to stability of Sobolev inequality on the sphere $\mathbb{S}^n$, we find that the Yamabe flow is a perfect replacement for rearrangement flow. Since it is a continuous flow preserving Sobolev energy decreasing and volume unchanged, it is exactly the flow what we need to help to reduce the global stability to local stability. Furthermore, unlike rearrangement flow which was used in \cite{DEFFL}, and as well in \cite{CLT1, CLT2, CLT3},  only keeping the Sobolev-energy continuously changed from the right, this kind of flow can keep the Sobolev energy continuously decreasing, hence avoiding some complicated discussions and calculations in the proof of global stability of Sobolev inequality. For the setting of CR sphere, we expect CR Yamabe flow can help to establish the quantitative relation between the global stability and local stability for Sobolev inequality on CR sphere $\mathbb{S}^{2n+1}$. Recall that CR Yamabe flow was first introduced by Jerison and Lee in \cite{JeLe1} in order to solving CR Yamabe problem on CR manifold. In the past few decades, the existence of long time and convergence of CR Yamabe flow has been studied extensively under various assumptions. For the CR Yamabe flow on the CR sphere $\mathbb{S}^{2n+1}$, Ho in \cite{P} established the convergence of CR Yamabe flow (see \cite{P1} for an alternative proof). This makes it  likely for us to establish the optimal stability for Sobolev inequality on CR sphere.
\vskip0.1cm

This paper is organized as follows. Section 2 is devoted to prove the global stability from local stability by CR Yamabe flow on the CR sphere $\mathbb{S}^{2n+1}$. In Section 3, we
will give the local stability of Sobolev inequalities with the optimal asymptotic lower
bound. In Section 4, we will establish the stability of HLS inequality for the special conformal index on the Heisenberg group with the optimal dimension-dependent constants by the dual stability method.
\medskip

{\bf Acknowledgement.} The authors wish to thank A. Malchiodi for his helpful discussions on the CR Yamabe flow and for pointing out several useful references and R. Frank for his suggestion on only considering the global stability of Sobolev inequality  for the real-valued functions.

\section{The local stability implies the global stabilty}\label{sec:2}
 Let $\mathcal{C}$ be the Cayley transform from $\mathbb{H}^n$ to $\mathbb{S}^{2n+1}$ and $J_{\mathcal{C}}$ be the Jacobian of this transform. For any function $u$ on $\mathbb{S}^{2n+1}$, define the function $F$ on $\mathbb{H}^n$ by $F=|J_{\mathcal{C}}|^{\frac{Q-2}{2Q}}u\circ \mathcal{C}$, then
$$\|u\|_{L^{\frac{2Q}{Q-2}}(\mathbb{S}^{2n+1})}=\|F\|_{L^{\frac{2Q}{Q-2}}(\mathbb{H}^{n})},$$
and
$$\int_{\mathbb{H}^{n}} |\nabla_{\mathbb{H}^n}f|^2 dzdt=2^{2+\frac{1}{n+1}}\mathcal{E}(u),$$
which can be easily checked (see also \cite{JeLe1,Frank-Lieb}). By Cayley transform, we only need to establish the following stability of Sobolev inequality on the CR sphere $\mathbb{S}^{2n+1}$ with optimal dimension-dependent constants.
\vskip0.3cm
\begin{theorem}\label{sob sta sph}
There exists an explicit constant $\beta$ such that for any $u\in S^1(\Sph^{2n+1})$, there holds
\begin{equation}
 \label{stability}
\mathcal E[u]
-|\mathbb{S}^{2n+1}|^{\frac{Q}{2}}(\frac{Q-2}{4})^2   \left( \int_{\Sph^{2n+1}} |u|^{\frac{2Q}{Q-2}} \,d\xi \right)^{\frac{Q-2}{Q}}\geq \frac{\beta}{Q}\inf_{g\in \mathcal{M}_\ast} \mathcal E[u-g].
\end{equation}
\end{theorem}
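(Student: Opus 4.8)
The plan is to deduce the global stability on $\mathbb{S}^{2n+1}$ from the quantitative local stability of Section~3 (Lemma~3.1) by using the CR Yamabe flow as a purely qualitative substitute for the continuous rearrangement flow used over $\mathbb{R}^n$ in \cite{DEFFL}. Write $\mathcal S$ for the sharp Sobolev constant on $\mathbb{S}^{2n+1}$, and for $w\in S^1(\mathbb{S}^{2n+1})$ set $d(w)^2:=\inf_{g\in\mathcal M_\ast}\mathcal E[w-g]$ and, when $\|w\|_{L^{2Q/(Q-2)}}=1$, $\delta(w):=\mathcal E[w]-\mathcal S\ge0$. By homogeneity I normalize $\|u\|_{L^{2Q/(Q-2)}}=1$. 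Two cheap reductions come first. Since $0\in\mathcal M_\ast$ one always has $d(u)^2\le\mathcal E[u]=\mathcal S+\delta(u)$, so if $\delta(u)\ge\varepsilon_0\mathcal S$ the asserted inequality holds at once with a $Q$-independent constant (for all $Q=2n+2\ge4$); hence I may assume $\delta(u)<\varepsilon_0\mathcal S$ — i.e.\ $u$ is near-optimal in the Sobolev quotient, although $d(u)$ may still be large because of bubbling. Secondly, using $\mathcal E[u]=\mathcal E[u_+]+\mathcal E[u_-]$ for real $u$, the diamagnetic bound $\abs{\nabla_{\mathbb{H}^n}\abs{u}}\le\abs{\nabla_{\mathbb{H}^n}u}$, and the fact that a non-negligible negative part forces a deficit bounded below uniformly in $Q$ (strict superadditivity of $t\mapsto t^{(Q-2)/Q}$), the general real case follows from the case $u>0$ with a fixed constant loss; so I may assume $u>0$.

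The decisive input from Section~3 is the local stability in the form: there exist universal $\delta_0,c_0>0$ such that $d(w)^2\le\delta_0^2\,\mathcal E[w]$ implies $\delta(w)\ge\frac{c_0}{Q}\,d(w)^2$ — the CR counterpart of the Bianchi--Egnell estimate, with $c_0$ and the $Q^{-1}$ rate extracted from the bispherical harmonics $\mathcal H_{j,k}$ and the spectrum $\lambda_{j,k}=(\frac{Q-2}{4}+j)(\frac{Q-2}{4}+k)$ of $\mathcal L$. The point is that the threshold is a fixed multiple of the ambient energy $\mathcal E[w]$, so it keeps pace with the largest possible size of $d(u)^2$ (namely $d(u)^2\le\mathcal E[u]$), and this is exactly what lets the following elementary flow argument produce the sharp $Q^{-1}$ rate. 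If $d(u)^2\le\delta_0^2\,\mathcal E[u]$ I apply Lemma~3.1 and am done. Otherwise $d(u)^2>\delta_0^2\,\mathcal E[u]\ge\delta_0^2\mathcal S$, and I run the volume-normalized CR Yamabe flow $t\mapsto u(\cdot,t)$ with $u(\cdot,0)=u$: by the theory of the CR Yamabe flow on $\mathbb{S}^{2n+1}$ (Ho \cite{P}; see also \cite{P1}) it exists for all $t\ge0$, preserves positivity and $\|u(\cdot,t)\|_{L^{2Q/(Q-2)}}\equiv1$, has $\mathcal E[u(\cdot,t)]$ — hence $\delta(u(\cdot,t))$ — non-increasing, and has $d(u(\cdot,t))\to0$ as $t\to\infty$. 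Since $t\mapsto d(u(\cdot,t))^2$ is continuous, exceeds $\delta_0^2\mathcal S$ at $t=0$, and tends to $0$, there is a first time $t_\ast$ with $d(u(\cdot,t_\ast))^2=\delta_0^2\mathcal S\le\delta_0^2\,\mathcal E[u(\cdot,t_\ast)]$; Lemma~3.1 at $t_\ast$ gives $\delta(u(\cdot,t_\ast))\ge\frac{c_0\delta_0^2}{Q}\mathcal S$, and monotonicity gives $\delta(u)\ge\delta(u(\cdot,t_\ast))\ge\frac{c_0\delta_0^2}{Q}\mathcal S$. Together with $d(u)^2\le\mathcal E[u]=\mathcal S+\delta(u)<(1+\varepsilon_0)\mathcal S$ this yields $\delta(u)\ge\frac{c_0\delta_0^2}{1+\varepsilon_0}\cdot\frac1Q\,d(u)^2$, and taking $\beta$ to be the minimum of the finitely many $Q$-independent constants produced above finishes the proof.

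\textbf{Main obstacle.} The argument above is short, and the substance is concentrated in its two inputs. First, since no rearrangement inequality is available over $\mathbb{H}^n$, the CR Yamabe flow must play the role that Steiner symmetrization plays over $\mathbb{R}^n$, so the reduction genuinely relies on importing its qualitative theory on $\mathbb{S}^{2n+1}$ from \cite{P,P1}: long-time existence, preservation of positivity and volume, monotonicity of the energy $\mathcal E$, and convergence to $\mathcal M_\ast$ (even convergence along a subsequence would suffice here). It is worth stressing that \emph{only} these qualitative facts enter — no exponential convergence rate, no parabolic smoothing — precisely because Lemma~3.1 comes with a threshold scaling like the ambient energy, i.e.\ like the quantity $d(u)^2$ being estimated. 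Secondly, Lemma~3.1 itself — the optimal local stability on the CR sphere, with the sharp $Q^{-1}$ constant and the energy-proportional threshold — is the computational heart of the paper: it requires a careful analysis of the quadratic form of the deficit in the bispherical-harmonic expansion with the zero modes (the constants and the CR dilations) projected out, and its $Q^{-1}$ behaviour is consistent with the upper bound $C_Q\le 4/(Q+6)$ of \cite{LiZh}.
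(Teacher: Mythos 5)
Your proposal follows essentially the same route as the paper: reduce to nonnegative functions by splitting into positive and negative parts, invoke the local stability of Lemma 3.1 with its energy-proportional threshold, and use the CR Yamabe flow (monotone Sobolev energy, preserved $L^{2Q/(Q-2)}$ norm, convergence to $\mathcal{M}_\ast$) together with a stopping-time argument to pass from local to global stability; your fixed absolute stopping threshold $\delta_0^2\mathcal{S}$ is an immaterial reorganization of the paper's relative-ratio threshold. The one inaccuracy is your claim that a non-negligible negative part forces a deficit bounded below \emph{uniformly} in $Q$: the superadditivity gain $(1-m)^{(Q-2)/Q}+m^{(Q-2)/Q}-1$ is only of order $1/Q$ (the paper's Proposition 2.3 obtains $1-2^{-2/Q}$), but since $O(1/Q)$ is exactly the rate being proved, this does not affect the conclusion.
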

\vskip0.1cm

Our proof is based on the following local stability of Sobolev inequality for positive functions on $\mathbb{S}^{2n+1}$ which will be proved in Section 3, Yamabe flow on $\mathbb{S}^{2n+1}$ and a concavity argument.
\begin{lemma}\label{local sta}
There exist nonnegative constants $\delta\in(0,1)$ and $c_0$ which is independent of $n$ such that for all $0\leq u\in S^1(\Sph^{2n+1})$ with
$$\inf_{g\in \mathcal{M}_\ast} \mathcal E[u-g]\leq \delta \mathcal E[u],$$
there holds
$$\mathcal E[u]
-|\mathbb{S}^{2n+1}|^{\frac{Q}{2}}(\frac{Q-2}{4})^2   \left( \int_{\Sph^{2n+1}} |u|^{\frac{2Q}{Q-2}} \,d\xi
\right)^{\frac{Q-2}{Q}}\geq \frac{c_0}{Q}\inf_{g\in \mathcal{M}_\ast} \mathcal E[u-g].$$

\end{lemma}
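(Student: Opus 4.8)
\emph{Strategy.} The plan is to establish the local inequality by a second-order (Bianchi--Egnell type) analysis of the deficit around the constant optimizer, adapting the quantitative techniques of \cite{DEFFL} with the bispherical harmonic decomposition $\bigoplus_{j,k}\mathcal H_{j,k}$ replacing the ordinary spherical harmonics. Since both the deficit and the distance $\inf_{g\in\mathcal{M}_*}\mathcal E[u-g]$ are invariant under the conformal group that generates $\mathcal{M}_*$, and since the inequality is positively homogeneous of degree two, I would first normalize: given $0\le u$ satisfying the closeness hypothesis, apply the conformal transformation realizing the nearest optimizer and rescale so that $u=1+\phi$, where $\phi$ is small in the $\mathcal E$-norm and $\mathcal E$-orthogonal to the tangent space $T:=\mathcal H_{0,0}\oplus\mathcal H_{1,0}\oplus\mathcal H_{0,1}$ of $\mathcal{M}_*$ at the constant. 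That $T$ is the correct tangent space follows by differentiating $|1-\overline\eta\cdot\xi|^{-(Q-2)/2}$ in $c,\eta,\overline\eta$ at $\eta=0$, which produces the constant together with the first-order modes $\xi_j\in\mathcal H_{1,0}$ and $\overline{\xi_j}\in\mathcal H_{0,1}$; orthogonality to $\mathcal H_{0,0}$ in particular forces $\int_{\Sph^{2n+1}}\phi\,d\xi=0$. Positivity of $u$ guarantees that we stay in the regime where $|u|^{2Q/(Q-2)}=u^{2Q/(Q-2)}$ is smooth near $u=1$.

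\emph{Second-order expansion and the sharp constant.} I would then Taylor-expand the deficit to second order in $\phi$. The quadratic part of $\mathcal E[1+\phi]$ is exactly $\mathcal E[\phi]$ (the cross term vanishes because $\int\phi=0$), while expanding $u^{2Q/(Q-2)}$ and taking the $\tfrac{Q-2}{Q}$ power of the resulting integral yields, with $p=\tfrac{2Q}{Q-2}$ so that $p-1=\tfrac{Q+2}{Q-2}$, the quadratic form
\begin{equation*}
\mathsf Q[\phi]=\mathcal E[\phi]-\Big(\tfrac{Q-2}{4}\Big)^2\,\tfrac{Q+2}{Q-2}\,\|\phi\|_{L^2(\Sph^{2n+1})}^2 .
\end{equation*}
Writing $a=\tfrac{Q-2}{4}$ and diagonalizing through $\mathcal E[\phi_{j,k}]=\lambda_{j,k}\|\phi_{j,k}\|_2^2$ with $\lambda_{j,k}=(a+j)(a+k)$ from \eqref{eig}, the $L^2$-threshold becomes $a^2\cdot\tfrac{a+1}{a}=a(a+1)$, so the coefficient of the $(j,k)$ mode in $\mathsf Q$ is $(a+j)(a+k)-a(a+1)$. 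This vanishes precisely on $(1,0)$ and $(0,1)$ (and on the excluded $(0,0)$), confirming $\ker\mathsf Q=T$, and is strictly positive on every remaining mode. Minimizing the ratio $\big(\lambda_{j,k}-a(a+1)\big)/\lambda_{j,k}$ over the stable modes, the worst case is attained at $(2,0)$ and $(0,2)$, giving
\begin{equation*}
\mathsf Q[\phi]\ \ge\ \frac{1}{a+2}\,\mathcal E[\phi]\ =\ \frac{4}{Q+6}\,\mathcal E[\phi]\qquad(\phi\perp T).
\end{equation*}
This is exactly the asymptotically sharp $\mathcal O(1/Q)$ coercivity and matches the upper bound $C_Q\le 4/(Q+6)$ recorded in the Remark after Theorem \ref{sob stability}.

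\emph{Main obstacle: uniform-in-$Q$ remainder control.} The delicate point is that $p=\tfrac{2Q}{Q-2}\to 2^+$ as $Q\to\infty$, so the cubic and higher remainders in the expansion of $\|1+\phi\|_p^2$ carry constants that threaten to degrade the sharp $1/Q$ factor obtained above. To absorb them I would adapt the ``cutting at various heights'' device of \cite{DEFFL}: decompose $\phi$ according to dyadic level sets, handle the region where $|\phi|$ is small by the second-order expansion with a remainder controlled uniformly in $Q$, and estimate the high region via the CR Sobolev embedding \eqref{eqsobq} together with the smallness furnished by the hypothesis $\inf_{g}\mathcal E[u-g]\le\delta\,\mathcal E[u]$. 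The bispherical-harmonic bookkeeping needed to run this on $\Sph^{2n+1}$ is genuinely more involved than on the real sphere because of the mixed holomorphic/antiholomorphic structure of the operators $T_j,\overline{T_j}$ and the two-parameter family $\lambda_{j,k}$; this computational complexity, rather than any conceptual gap, is where the real work lies. The outcome is that for $\delta$ small enough (independent of $n$) the remainder is dominated by a fixed fraction of $\mathsf Q[\phi]$, leaving a constant of the form $c_0/Q$ with $c_0$ absolute, which is the assertion of the lemma.
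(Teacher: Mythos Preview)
Your strategy is essentially the paper's: normalize to $u=1+r$ with $r\perp\mathcal H_{0,0}\oplus\mathcal H_{1,0}\oplus\mathcal H_{0,1}$, identify the sharp spectral gap $4/(Q+6)$ on the $(2,0)$ and $(0,2)$ modes, and then adapt the cutting technique of \cite{DEFFL} to control the remainder uniformly in $Q$. Two technical points where the paper is more specific than your outline and which you would need to confront: (i) the decomposition is not dyadic but a \emph{three}-piece split $r=r_1+r_2+r_3$ at two fixed heights $0<\gamma<M$, leading to three terms $I_1,I_2,I_3$; the top piece $I_3$ is handled by Sobolev as you anticipate, but the middle piece $I_2$ requires an \emph{improved} spectral gap, obtained by showing that $r_2$ has small projection onto \emph{all} low-degree harmonics via Duoandikoetxea's reverse H\"older inequality $\|Y_{j,k}\|_{L^p}\le(p-1)^{(j+k)/2}$; (ii) since the pieces $r_1,r_2,r_3$ are not individually orthogonal to $T$, the bottom piece $I_1$ needs a delicate bookkeeping of the projections of $r_1$ onto $1,\xi_j,\overline{\xi_j}$, which are small only because $r$ itself is orthogonal and $r_2+r_3$ has small support. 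None of this invalidates your plan, but ``dyadic level sets'' and ``high region via Sobolev'' alone will not close the argument without these two ingredients.
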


\medskip

\subsection{A new proof for global stability of Sobolev inequality in $\mathbb{S}^n$ through local stability}
Let us first introduce the Yamabe flow on the sphere $\mathbb{S}^n$ and give a new proof for global stability of Sobolev inequality in $\mathbb{S}^n$ without using any rearrangement technique.
\medskip

Let us denote by $\mathcal{M}_{S}$ the extremal set of Sobolev inequality on the sphere $\mathbb{S} ^n$, that is
$$\mathcal{M}_{S}=\left\{c\left(\frac{\sqrt{1-|\xi|^2}}{1-\xi\cdot w}\right)^{\frac{n-2}{2}}:\, ~\xi\in \mathbb{R}^{n+1}, c\in \mathbb{R}, |\xi|<1\right\}.$$
Define
\begin{equation*}\begin{split}
\nu(\delta)=\inf\left\{\mathcal{S}_S(u): u\geq 0, \inf_{h\in {\mathcal M}_S}\|L_{g_{\mathbb{S}^n}}^{\frac{1}{2}}(u-h)\|_2^2\leq \delta\|L_{g_{\mathbb{S}^n}}^{\frac{1}{2}}(u)\|_2^2\right\},
\end{split}\end{equation*}
where $$\mathcal{S}_{S}(u)=\frac{\mathcal{S}_{1,n}\|L_{g_{\mathbb{S}^n}}^{\frac{1}{2}}(u)\|_2^2-\|u\|^2_{\frac{2n}{n-2}}}{d^2(u,{\mathcal M}_S)},\ \ d^2(u,\mathcal{M}_S)=\min\{\|L_{g_{\mathbb{S}^n}}^{\frac{1}{2}}(u-h)\|_2^2:\ h \in \mathcal{M}_S \}$$
and $L_{g_{\mathbb{S}^{n}}}=-\Delta_{g_{\mathbb{S}^{n}}}+\frac{n(n-2)}{4}$ denotes the conformal Laplacian operator in $\mathbb{S}^n$.
In the $n$-dimensional sphere $\mathbb{S}^{n}$, given the initial metric $g_0=u_0^{\frac{4}{n-2}}g_{\mathbb{S}^{n}}$, define the Yamabe flow
\begin{equation}\label{YM1}\begin{cases}
&\frac{\partial u(x,t)}{dt}=\left(r_{g(t)}-R_{g(t)}(x)\right)u(x,t),\\
&u(x,0)=u_0,
\end{cases}\end{equation}
where $R_{g(t)}(x)$ is the scalar curvature at the point $x$ of manifold $(S^{n},u(x,t)^{\frac{4}{n-2}}g_{\mathbb{S}^{n}})$  and $g_{\mathbb{S}^n}$ denotes the standard metric on the sphere $\mathbb{S}^n$, $r_{g(t)}$ is the average of $R_{g(t)}$ in $(S^{n},u^{\frac{4}{n-2}}(x,t)g_{\mathbb{S}^{n}} )$. It follows from \cite{Ye} that $u^{\frac{4}{n-2}}(x,t)$ has the long time behavior and has a unique limit $u_{\infty}$ in $W^{1,2}(\mathbb{S}^n)$
when $t\rightarrow +\infty$ with $R_{g_{\infty}}$ equal to positive constant.
\medskip

The total scalar curvature functional $S(t)$ on $(S^{n},u^{\frac{4}{n-2}}(x,t)g_{\mathbb{S}^{n}})$ is given by

$$S(t)=\int_{\mathbb{S}^n}R_{g(t)}(x)dv_{g(x,t)},\ \ g(x,t)=u(x,t)^{\frac{4}{n-2}}g_{\mathbb{S}^{n}}.$$
For notational simplicity, we write $u(t)$ for $u(x,t)$, $g(t)$ for $g(x,t)$. We claim that $S(t)$ is decreasing with respect to the variable $t$. Recall that the conformal Laplacian in $(S^{n},u(t)^{\frac{4}{n-2}}g_{\mathbb{S}^{n}})$ is given by $L_{g(t)}=-\Delta_{g(t)}+\frac{n-2}{4(n-1)}R_{g(t)}$ with $g(t)=u(t)^{\frac{4}{n-2}}g_{\mathbb{S}^{n}}$, where $\Delta_{g(t)}$ denotes the Laplace-Beltrami operator associated with the metric $g(t)$. It satisfies the conformal law
$$L_{g(t)}(\phi)=u^{-\frac{n+2}{n-2}}(t)L_{g_{\mathbb{S}^{n}}}(u(t)\phi),\ \ \forall\ \phi\in C^{\infty}(\mathbb{S}^n).$$
Let $\phi=1$, we immediately get that $R_{g(t)}=\frac{4(n-1)}{n-2}u^{-\frac{n+2}{n-2}}(t)L_{g_{\mathbb{S}^{n}}}(u(t))$.   Then we can write
\begin{equation}\begin{split}
S(t)&=\int_{\mathbb{S}^{n}}L_{g_{\mathbb{S}^n}}(u(t))u^{-\frac{n+2}{n-2}}(t)dv_{g(t)}\\
&=\int_{\mathbb{S}^{n}}L_{g_{\mathbb{S}^n}}(u(t))u(t)dv_{g_{\mathbb{S}^n}}.
\end{split}\end{equation}
Hence we deduce that $$S'(t)=2\int_{\mathbb{S}^{n}}L_{g_{\mathbb{S}^n}}(u(t))\frac{\partial u(t)}{dt}dv_{g_{\mathbb{S}^n}}.$$ Applying the Yamabe flow equation \eqref{YM1}, we immediately deduce that
\begin{equation}\begin{split}
S'(t)&=2\int_{\mathbb{S}^{n}}\left(r_{g(t)}-R_{g(t)}(x)\right)u(t)L_{g_{\mathbb{S}^n}}(u(t))dv_{g_{\mathbb{S}^n}}\\
&=2\int_{\mathbb{S}^{n}}\left(r_{g(t)}-R_{g(t)}(x)\right)u^{-\frac{n+2}{n-2}}(t) L_{g_{\mathbb{S}^n}}(u(t)) u^{\frac{2n}{n-2}}(t)dv_{g_{\mathbb{S}^n}}\\
&=\frac{2(n-2)}{4(n-1)}\int_{\mathbb{S}^{n}}\left(r_{g(t)}-R_{g(t)}(x)\right)R_{g(t)}dv_{g(t)}\\
&=-\frac{2(n-2)}{4(n-1)}\int_{\mathbb{S}^{n}}\left(r_{g(t)}-R_{g(t)}(x)\right)^2dv_{g(t)}\leq 0,
\end{split}\end{equation}
where the last equality holds since $\int_{\mathbb{S}^{n}}\left(r_{g(t)}-R_{g(t)}(x)\right)dv_{g(t)}=0$ from the definition of $r_{g(t)}$.
This proves
$$S(t)=\int_{\mathbb{S}^n}R_{g(t)}(x)dv_{g(t)}=\int_{\mathbb{S}^{n}}L_{g_{\mathbb{S}^n}}(u(t))u(t)dv_{g_{\mathbb{S}^n}}=\int_{\mathbb{S}^n}
\left(|\nabla_{g_{\mathbb{S}^n}}u(t)|^2+\frac{n(n-2)}{4}|u(t)|^2\right)dv_{g_{\mathbb{S}^n}}$$ is non-increasing. Next, we furthermore claim that the volume $$V(t)=\int_{\mathbb{S}^n}dv_{g(t)}=\int_{\mathbb{S}^{n}}u^{\frac{2n}{n-2}}(t)dv_{g_{\mathbb{S}^n}}$$ remains unchanged. Direct computation yields
\begin{equation}\begin{split}
V'(t)&=\frac{2n}{n-2}\int_{\mathbb{S}^{n}}u^{\frac{n+2}{n-2}}(t)\frac{\partial u(t)}{dt}dv_{g_{\mathbb{S}^n}}\\
&=\frac{2n}{n-2}\int_{\mathbb{S}^{n}}u^{\frac{n+2}{n-2}}(t)\left(r_{g(t)}-R_{g(t)}(x)\right)u(t)dv_{g_{\mathbb{S}^n}}\\
&=\frac{2n}{n-2}\int_{\mathbb{S}^{n}}\left(r_{g(t)}-R_{g(t)}(x)\right)dv_{g(t)}=0.
\end{split}\end{equation}
In summary, the Yamabe flow can make the Sobolev energy $$\int_{\mathbb{S}^n}
\left(|\nabla_{g_{\mathbb{S}^n}}u(t)|^2+\frac{n(n-2)}{4}|u(t)|^2\right)dv_{g_{\mathbb{S}^n}}$$ decrease and make the volume $\int_{\mathbb{S}^n}dv_{g(t)}=\int_{\mathbb{S}^{n}}u^{\frac{2n}{n-2}}(t)dv_{g_{\mathbb{S}^n}}$ unchanged.
\medskip

Recall that the Yamabe flow has the long time behavior and there exists a unique limit $u_{\infty}\in W^{1,2}(\mathbb{S}^n)$ such that
$u(t)$ converges to $u_{\infty}$ in $W^{1,2}(\mathbb{S}^n)$ with $R_{g_{\infty}}$ equal to some positive constant. Hence $u_{\infty}$ satisfies Yamabe equation on the sphere:
$$-\Delta_{g_{\mathbb{S}^n}}u_{\infty}+\frac{n(n-2)}{4}u_{\infty}=R_{g_{\infty}}u^{\frac{n+2}{n-2}}_{\infty}.$$
According to well-known classification result of Yamabe equation on the sphere, we deduce that $u_{\infty}$ is in fact some extremal of Sobolev inequality on $\mathbb{S}^n$.
Obviously
$$d^{2}(u(t),\mathcal{M}_S)=\inf_{h\in \mathcal{M}_S}\int_{\mathbb{S}^n}\left(|\nabla_{g_{\mathbb{S}^n}}(u(t)-h)|^2+\frac{n(n-2)}{4}|u(t)-h|^2\right)dv_{g_{\mathbb{S}^n}}\rightarrow 0$$
as $t\rightarrow +\infty$ since $u(t)$ converges to some extremal of Sobolev inequality of sphere in $W^{1,2}(\mathbb{S}^n)$.
\medskip

Now we are in a position to use the Yamabe flow replacing rearrangement flow to establish the relation between the local stability  and the global stability of the Sobolev inequality. For any nonnegative function $u_0\in W^{1,2}(\mathbb{S}^n)$ satisfying $$\inf_{h\in \mathcal{M}_S}\|L_{g_{\mathbb{S}^n}}^{\frac{1}{2}}(u_0-h)\|_2^2> \delta\|L_{g_{\mathbb{S}^n}}^{\frac{1}{2}}(u_0)\|_2^2,$$
we construct the Yamabe flow:\begin{equation}\begin{cases}
&\frac{\partial u(t)}{dt}=\left(r_{g(t)}-R_{g(t)}(x)\right)u(t),\\
&u(0)=u_0.
\end{cases}\end{equation}
Through the previous analysis of the Yamabe flow, we know there exists a time $t_0$ such that
$$\inf_{h\in \mathcal{M}_S}\|L_{g_{\mathbb{S}^n}}^{\frac{1}{2}}(u(t_0)-h)\|_2^2=\delta\|L_{g_{\mathbb{S}^n}}^{\frac{1}{2}}(u(t_0))\|_2^2$$
since $$\frac{ \mathop {\inf }\limits_{h\in \mathcal{M}_S}\|L_{g_{\mathbb{S}^n}}^{\frac{1}{2}}(u(t)-h)\|_2^2}{\|L_{g_{\mathbb{S}^n}}^{\frac{1}{2}}(u(t))\|_2^2}$$ continuously converges to zero as $t\rightarrow +\infty$. This together with $S(t)$ being non-increasing and $V(t)$ remaining unchanged gives  that
\begin{equation}\begin{split}
&\frac{S_{1,n}\|L_{g_{\mathbb{S}^n}}^{\frac{1}{2}}(u_0)\|_2^2-\|u_0\|^2_{\frac{2n}{n-2}}}{d^2(u_0,M_S)}\\
&\ \ \geq \frac{S_{1,n}\|L_{g_{\mathbb{S}^n}}^{\frac{1}{2}}(u_0)\|_2^2-\|u_0\|^2_{\frac{2n}{n-2}}}{\|L_{g_{\mathbb{S}^n}}^{\frac{1}{2}}(u_0)\|_2^2}\\
&\ \ \geq\frac{S_{1,n}\|L_{g_{\mathbb{S}^n}}^{\frac{1}{2}}(u(t_0))\|_2^2-\|u(t_0)\|^2_{\frac{2n}{n-2}}}{\|L_{g_{\mathbb{S}^n}}^{\frac{1}{2}}(u(t_0))\|_2^2}\\
&\ \ =\delta \frac{S_{1,n}\|L_{g_{\mathbb{S}^n}}^{\frac{1}{2}}(u(t_0))\|_2^2-\|u(t_0)\|^2_{\frac{2n}{n-2}}}{d^2(u(t_0),\mathcal{M}_S)}\geq \delta\nu(\delta).
\end{split}\end{equation}
This establishes the quantitative relation between the local stability and global stability of Sobolev inequality on the sphere $\mathbb{S}^n$. In fact, we derive that for any non-negative function $u\in W^{1,2}(\mathbb{S}^n)$, there holds
$$S_{1,n}\|L_{g_{\mathbb{S}^n}}^{\frac{1}{2}}(u)\|_2^2-\|u\|^2_{\frac{2n}{n-2}}\geq \sup_{\delta \in (0,1)}\{\delta \nu(\delta)\}d^2(u,M_S).$$

\subsection{CR-Yamabe flow}
Let us recall some basic concept for CR manifold and CR Yamabe flow. On can refer to \cite{JeLe1,P, P1} for detailed introduction.
\medskip

Consider the CR sphere $\mathbb{S}^{2n+1}$ in the complex space $\mathbb{C}^{n+1}$ equipped with the contact form $\theta_0$. Then
$(\mathbb{S}^{2n+1}, \theta_0)$ is in fact a compact CR manifold of dimension $2n+1$ with a given contact form $\theta_0$.
CR Yamabe flow first introduced by Jerison and Lee in \cite{JeLe1} is defined by
\begin{equation}\begin{cases}
&\frac{\partial\theta(t)}{\partial t}=\left(r_{\theta(t)}-R_{\theta(t)}\right)\theta(t),\\
&\theta(0)=\theta_0,
\end{cases}\end{equation}
where $R_{\theta(t)}(x)$ is the Webster scalar curvature at the point $x$ of CR manifold $(\mathbb{S}^{2n+1}, \theta(t))$ and
$$r_{\theta(t)}=\frac{\int_{\mathbb{S}^{2n+1}}R_{\theta(t)}(x)dV_{\theta(t)}}{\int_{\mathbb{S}^{2n+1}}dV_{\theta(t)}}$$
is the average of $R_{\theta(t)}(x)$ on $(\mathbb{S}^{2n+1}, \theta(t))$. If we write
$\theta(t)=u(t)^{\frac{2}{n}}\theta_0$ for some function $u(t)$, then $dV_{\theta(t)}=u^{2+\frac{2}{n}}dV_{\theta_0}$ and
the above CR-Yamabe flow can be written as
\begin{equation}\begin{cases}
&\frac{\partial u(t)}{\partial t}=\frac{n}{2}\left(r_{\theta(t)}-R_{\theta(t)}\right)u(t),\\
&u(0)=1.
\end{cases}\end{equation}
Since $\theta(t)=u(t)^{\frac{2}{n}}\theta_0$, we have the CR Yamabe equation:
\begin{equation}\label{CR-equation}
-(2+\frac{2}{n})\Delta_{\theta_0}u(t)+R_{\theta_0}u(t)=R_{\theta(t)}u^{1+\frac{2}{n}}(t),
\end{equation}
where $\Delta_{\theta_0}$ denotes the corresponding sub-Laplacian operator.
The total Webster scalar curvature  functional $S_{\theta(t)}$ on $(S^{2n+1},u(t)^{\frac{2}{n}}\theta_0)$ is given by
$$S_{\theta(t)}=\int_{\mathbb{S}^{2n+1}}R_{\theta(t)}(x)dV_{\theta(t)},\  \theta(t)=u(t)^{\frac{2}{n}}\theta_0.$$
We claim that $S_{\theta(t)}$ is decreasing with respect to the variable $t$. Through the CR Yamabe equation, we can write

\begin{equation}\begin{split}
S_{\theta(t)}&=\int_{\mathbb{S}^{2n+1}}\big(-(2+\frac{2}{n})\Delta_{\theta_0}u(t)+R_{\theta_0}u(t)\big)u^{-1-\frac{2}{n}}(t)dV_{\theta(t)}\\
&=\int_{\mathbb{S}^{2n+1}}\big(-(2+\frac{2}{n})\Delta_{\theta_0}u(t)+R_{\theta_0}u(t)\big)u(t)dV_{\theta_0}.
\end{split}\end{equation}
Hence $$\frac{dS_{\theta(t)}}{dt}=2\int_{\mathbb{S}^{2n+1}}\big(-(2+\frac{2}{n})\Delta_{\theta_0}u(t)+R_{\theta_0}u(t)\big)\frac{\partial u}{\partial t}dV_{\theta_0}.$$ Applying the CR Yamabe flow, we immediately deduce that
\begin{equation}\begin{split}
\frac{dS_{\theta(t)}}{dt}&=n\int_{\mathbb{S}^{2n+1}}\left(r_{\theta(t)}-R_{\theta(t)}\right)u(t)\big(-(2+\frac{2}{n})\Delta_{\theta_0}u(t)+R_{\theta_0}u(t)\big)dV_{\theta_0}\\
&=n\int_{\mathbb{S}^{2n+1}}\left(r_{\theta(t)}-R_{\theta(t)}\right)R_{\theta(t)}u^{2+\frac{2}{n}}(t)dV_{\theta_0}\\
&=n\int_{\mathbb{S}^{2n+1}}\left(r_{\theta(t)}-R_{\theta(t)}\right)R_{\theta(t)}dV_{\theta(t)}\\
&=-n\int_{\mathbb{S}^{2n+1}}\left(r_{\theta(t)}-R_{\theta(t)}\right)^2dV_{\theta(t)}\leq 0,
\end{split}\end{equation}
where the last equality holds since $\int_{\mathbb{S}^{2n+1}}\left(r_{\theta(t)}-R_{\theta(t)}\right)dV_{\theta(t)}=0$ from the definition of $r_{\theta(t)}$.
This proves
\begin{equation}\begin{split}
S_{\theta(t)}&=\int_{\mathbb{S}^{2n+1}}\big(-(2+\frac{2}{n})\Delta_{\theta_0}u(t)+R_{\theta_0}u(t)\big)u^{-1-\frac{2}{n}}(t)u^{2+\frac{2}{n}}dV_{\theta_0}\\
&=\int_{\mathbb{S}^{2n+1}}\big(-(2+\frac{2}{n})\Delta_{\theta_0}u(t)+R_{\theta_0}u(t)\big)u(t)dV_{\theta_0}\\
&=\int_{\mathbb{S}^{2n+1}}\Big((2+\frac{2}{n})|\nabla_{\theta_0}u(t)|^2+R_{\theta_0}|u(t)|^2\Big)dV_{\theta_0}
\end{split}\end{equation}
is non-increasing. Next, we furthermore claim that the volume $$V_{\theta(t)}=\int_{\mathbb{S}^{2n+1}}dV_{\theta(t)}=\int_{\mathbb{S}^{2n+1}}u^{2+\frac{2}{n}}dV_{\theta_0}$$ remains unchanged.
Applying the CR Yamabe flow equation again, we derive
\begin{equation}\begin{split}
\frac{dV_{\theta(t)}}{dt}&=(2+\frac{2}{n})\int_{\mathbb{S}^{2n+1}}u^{1+\frac{2}{n}}(x,t)\frac{\partial u}{\partial t}dV_{\theta_0}\\
&=(2+\frac{2}{n})\int_{\mathbb{S}^{2n+1}}u^{2+\frac{2}{n}}\left(r_{\theta(t)}-R_{\theta(t)}\right)dV_{\theta_0}\\
&=(2+\frac{2}{n})\int_{\mathbb{S}^{2n+1}}\left(r_{\theta(t)}-R_{\theta(t)}\right)udV_{\theta(t)}=0.
\end{split}\end{equation}
In summary, the CR Yamabe flow can make the Sobolev energy $$\int_{\mathbb{S}^{2n+1}}\Big((2+\frac{2}{n})|\nabla_{\theta_0}u(t)|^2+R_{\theta_0}|u(t)|^2\Big)dV_{\theta_0}$$ decrease and keep the volume $$\int_{\mathbb{S}^{2n+1}}u^{2+\frac{2}{n}}dV_{\theta_0}$$  unchanged.
\medskip

If we choose the $\theta_0$ to be the standard contact form of CR sphere $\mathbb{S}^{2n+1}$, that is to say $(\mathbb{S}^{2n+1}, \theta_0)$ is $CR$ sphere $\mathbb{S}^{2n+1}$ and $R_{\theta_0}=n(n+1)$. Then it follows that
$$\big(-(2+\frac{2}{n})\Delta_{\theta_0}+R_{\theta_0}\big)=\frac{n}{4(n+1)}\big(-\frac12 \sum_{j=1}^{n+1} \left(\overline{T_j}T_j + T_j \overline{T_j}\right) + \frac{n^2}4\big)=\frac{n}{4(n+1)} \mathcal L.$$  In \cite{P}, the author proved that the convergence of CR Yamabe flow on the CR sphere $\mathbb{S}^{2n+1}$ (see \cite{P1} for an alternative proof). Hence it follows that there exists a unique limit $u_{\infty}$ such that $u(x,t)$ converges to $u_{\infty}$ in $S^{1}(\mathbb{S}^{2n+1})$ with $R_{\theta(\infty)}$ equaling to some positive constant. Hence $u_{\infty}$ satisfies CR Yamabe equation on CR sphere $\mathbb{S}^{2n+1}$:
$$\frac{n}{4(n+1)} \mathcal L(u_{\infty})=R_{\theta_{\infty}}u^{1+\frac{2}{n}}_{\infty}.$$
According to the well-known classification result of CR Yamabe equation on the CR sphere $\mathbb{S}^{2n+1}$ obtained by Jerison and Lee in \cite{JeLe}, we deduce that $u_{\infty}$ is in fact some extremal of Sobolev inequality on the CR sphere $\mathbb{S}^{2n+1}$.

\subsection{The local stability of Sobolev inequality on the CR sphere can imply the corresponding global stability}

Now we are in a position to use CR-Yamabe flow to establish the relation between the local stability of Sobolev inequality on the CR sphere and the corresponding global stability.  Define
\begin{equation*}\begin{split}
\nu_{*}(\delta)=\inf\left\{\mathcal{S}_S(u): u\geq 0,
\inf_{h\in \mathcal{M}_{*}}\|\mathcal L^{\frac{1}{2}}(u-h)\|_2^2\leq \delta\|\mathcal L^{\frac{1}{2}}(u)\|_2^2\right\},
\end{split}\end{equation*}
where $$\mathcal{S}_{*}(u)=\frac{\mathcal E[u]
-|\mathbb{S}^{2n+1}|^{\frac{Q}{2}}(\frac{Q-2}{2})^2  \left( \int_{\Sph^{2n+1}} |u|^{\frac{2Q}{Q-2}} \,d\xi \right)^{\frac{Q-2}{Q}}}
{  \mathop {\inf }\limits_{g\in \mathcal{M}_{*}} \mathcal E[u-g].}$$ and $\mathcal{M}_{*}$ denotes the set of real-valued extremal functions of Sobolev inequality on the CR sphere $\mathbb{S}^{2n+1}$.
For any nonnegative $u_0\in S^1(\mathbb{S}^{2n+1})$ satisfying $$\inf_{h\in \mathcal{M}_{*}}\|\mathcal L^{\frac{1}{2}}(u_0-h)\|_2^2> \delta\|\mathcal L^{\frac{1}{2}}(u_0)\|_2^2,$$
we consider the initial CR manifold $(\mathbb{S}^{2n+1}, u_0^{\frac{2}{n}}\theta_0)$, where $\theta_0$ is the standard contact form of the CR sphere $\mathbb{S}^{2n+1}$. We construct the CR Yamabe flow as follows :\begin{equation}\begin{cases}
&\frac{\partial u}{dt}=\frac{n}{2}\left(r_{\theta(t)}-R_{\theta(t)}(x)\right)u,\\
&u(0)=u_0.
\end{cases}\end{equation}
Through earlier analysis of the CR Yamabe flow, we know $$\frac{
  \mathop {\inf }\limits_{h\in \mathcal{M}_{*}}\|\mathcal L^{\frac{1}{2}}(u(t)-h)\|_2^2}{\|\mathcal L^{\frac{1}{2}}(u)\|_2^2}$$ continuously converges to zero as $t\rightarrow +\infty$ since
$$\|L^{\frac{1}{2}}(u(t))\|_2^2\geq |\mathbb{S}^{2n+1}|^{\frac{Q}{2}}(\frac{Q-2}{4})^2\|u(t)\|_{\frac{2Q}{Q-2}}^{\frac{Q-2}{Q}}=(\frac{Q-2}{4})^2\|u_0\|_{\frac{2Q}{Q-2}}^{\frac{Q-2}{Q}}.$$
Then it follows that there exists a time $t_0$ such that
$$\inf_{h\in \mathcal{M}_{*}}\|\mathcal L^{\frac{1}{2}}(u(t_0)-h)\|_2^2=\delta\|\mathcal L^{\frac{1}{2}}(u(t_0))\|_2^2$$
since $$\frac{  \mathop {\inf }\limits_{h\in \mathcal{M}_{*}^{\rm rea}}\|\mathcal L^{\frac{1}{2}}(u(t)-h)\|_2^2}{\|\mathcal L^{\frac{1}{2}}(u(t))\|_2^2}$$  converges continuously  to zero as $t\rightarrow +\infty$. This together with $S_{\theta(t)}$ being non-increasing and $V(t)$ remaining unchanged gives that
\begin{equation}\begin{split}
&\frac{\|\mathcal L^{\frac{1}{2}}(u_0)\|_2^2-|\mathbb{S}^{2n+1}|^{\frac{Q}{2}}(\frac{Q-2}{4})^2\|u_0\|^2_{\frac{2Q}{Q-2}}}{\mathop {\inf }\limits_{g\in \mathcal{M}_\ast} \mathcal E[u_0-g]}\\
&\ \ \geq \frac{\|\mathcal L^{\frac{1}{2}}(u_0)\|_2^2-|\mathbb{S}^{2n+1}|^{\frac{Q}{2}}(\frac{Q-2}{4})^2\|u_0\|^2_{\frac{2Q}{Q-2}}}{\|\mathcal L^{\frac{1}{2}}(u_0)\|_2^2}\\
&\ \ \geq\frac{\|\mathcal L^{\frac{1}{2}}(u(t_0))\|_2^2-|\mathbb{S}^{2n+1}|^{\frac{Q}{2}}(\frac{Q-2}{4})^2\|u(t_0)\|^2_{\frac{2Q}{Q-2}}}{\|\mathcal L^{\frac{1}{2}}(u(t_0))\|_2^2}\\
&\ \ =\delta \frac{\|\mathcal L^{\frac{1}{2}}(u(t_0))\|_2^2-|\mathbb{S}^{2n+1}|^{\frac{Q}{2}}(\frac{Q-2}{4})^2\|u(t_0)\|^2_{\frac{2Q}{Q-2}}}{  \mathop {\inf }\limits_{g\in \mathcal{M}_\ast} \mathcal E[u(t_0)-g]}\geq \delta\nu(\delta).
\end{split}\end{equation}
This establishes the quantitative relation between the local stability and global stability of Sobolev inequality on the CR sphere $\mathbb{S}^{2n+1}$. In conclusion, we derive that for any non-negative $u_0\in S^{1}(\mathbb{S}^{2n+1})$, there holds
$$\|\mathcal L^{\frac{1}{2}}(u_0)\|_2^2-|\mathbb{S}^{2n+1}|^{\frac{Q}{2}}(\frac{Q-2}{4})^2\|u_0\|^2_{\frac{2Q}{Q-2}}\geq \sup_{\delta \in (0,1)}\{\delta \nu(\delta)\}\inf_{g\in \mathcal{M}_\ast} \mathcal E[u_0-g].$$

\subsection{From nonnegative functions to arbitrary functions}\label{sec:posneg}
In this subsection we will prove the stability of Sobolev inequalities on the CR sphere with
optimal dimension-dependent constants for general real-valued functions, namely we shall give the
proof of Theorem \ref{sob stability}. Recall
$$\mathcal{S}_{\ast}(u)=\frac{\mathcal{E}[u]-|\mathbb{S}^{2n+1}|^{\frac{Q}{2}}(\frac{Q-2}{4})^2\|u\|_{{L^{2^*}(\mathbb{S}^{2n+1})}}^2}{   \mathop {\inf }\limits_{g\in \mathcal{M}_{\ast}}\mathcal{E}[u-g]},\ \ 2^{*}=\frac{2Q}{Q-2}$$
and denote by $C_{BE}$ the optimal constant for stability of the Sobolev inequality, i.e.,
$$C_{BE}=\inf_{u\in S^{1}(\mathbb{S}^{2n+1})}\mathcal{S}_{\ast}(u).$$
Similarly, we denote by $C_{\rm BE}^{\rm pos}$ the optimal constant  when restricted to nonnegative functions $u$. We will establish the relationship between these two optimal constants.

\begin{proposition}\label{Prop:BE} For any $n\geq 1$,
\[\label{eq:cBE1}
C_{\rm BE}\ge \min\left\{ \textstyle \frac12  C_{\rm BE}^{\rm pos}, 1-2^{-\frac 2Q} \right\} .
\]
\end{proposition}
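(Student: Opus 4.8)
The plan is to compare an arbitrary real-valued $u$ with the auxiliary nonnegative function $|u|$, and to exploit the fact that both the Dirichlet form $\mathcal E$ and the $L^{2^*}$-norm only ``see'' $|u|$ in the relevant places. First I would record the two elementary facts: $\mathcal E[|u|]\le\mathcal E[u]$ (since $|\nabla_{\mathbb H^n}|u||\le|\nabla_{\mathbb H^n}u|$ pointwise, equivalently the diamagnetic-type inequality for the operators $T_j$ on the sphere), and $\||u|\|_{2^*}=\|u\|_{2^*}$. Consequently the Sobolev deficit only decreases when passing from $u$ to $|u|$:
\begin{equation*}
\mathcal E[u]-|\mathbb S^{2n+1}|^{Q/2}\Bigl(\tfrac{Q-2}{4}\Bigr)^2\|u\|_{2^*}^2
\ \ge\
\mathcal E[|u|]-|\mathbb S^{2n+1}|^{Q/2}\Bigl(\tfrac{Q-2}{4}\Bigr)^2\||u|\|_{2^*}^2
\ \ge\ C_{\rm BE}^{\rm pos}\,\inf_{g\in\mathcal M_*}\mathcal E[|u|-g].
\end{equation*}
So it remains to bound $\inf_g\mathcal E[|u|-g]$ from below by a dimension-independent multiple of $\inf_g\mathcal E[u-g]$, up to the competing term $\bigl(1-2^{-2/Q}\bigr)$ times the deficit.

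The key step is a dichotomy on the size of the negative part $u_-=\max\{-u,0\}$ relative to $u$. Write $u=u_+-u_-$ and let $g_0\in\mathcal M_*$ be (nearly) optimal for $|u|$, so $\mathcal E[|u|-g_0]=\inf_g\mathcal E[|u|-g]+\varepsilon$. Case 1: $u_-$ is ``large'', say $\mathcal E[u_-]\ge \tfrac12\,\mathcal E[u-g_0]$ for every $g_0\in\mathcal M_*$; in this regime one shows directly that the deficit itself is comparable to $\mathcal E[u]$, using that the Sobolev inequality applied to $u_+$ and $u_-$ separately, combined with $\|u\|_{2^*}^2\le \|u_+\|_{2^*}^2+\|u_-\|_{2^*}^2$ and the strict convexity gap $t\mapsto t^{2/Q}$ contributing the factor $1-2^{-2/Q}$, forces
\begin{equation*}
\mathcal E[u]-|\mathbb S^{2n+1}|^{Q/2}\Bigl(\tfrac{Q-2}{4}\Bigr)^2\|u\|_{2^*}^2\ \ge\ \bigl(1-2^{-2/Q}\bigr)\,\mathcal E[u]\ \ge\ \bigl(1-2^{-2/Q}\bigr)\,\inf_{g\in\mathcal M_*}\mathcal E[u-g].
\end{equation*}
Case 2: $u_-$ is ``small'', $\mathcal E[u_-]<\tfrac12\,\mathcal E[u-g_0]$; then since $|u|=u+2u_-$ we get $\mathcal E[|u|-g_0]^{1/2}\le \mathcal E[u-g_0]^{1/2}+2\mathcal E[u_-]^{1/2}$ is not quite what is needed, so instead I would argue that $u-g_0 = (|u|-g_0)-2u_-$ gives $\mathcal E[u-g_0]\le 2\mathcal E[|u|-g_0]+8\mathcal E[u_-]$; if $u_-$ is controlled by $\mathcal E[u-g_0]$ this can be absorbed, yielding $\inf_g\mathcal E[u-g]\le \mathcal E[u-g_0]\le C\,\inf_g\mathcal E[|u|-g]$ with $C$ close to $2$. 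Combining the two cases with the displayed reduction gives $C_{\rm BE}\ge\min\{\tfrac12 C_{\rm BE}^{\rm pos},\,1-2^{-2/Q}\}$.

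The main obstacle is the bookkeeping in Case 2: one must cleanly extract the constant $\tfrac12$ (rather than something worse) in front of $C_{\rm BE}^{\rm pos}$, which requires showing that when the negative part is small the near-optimal $g_0$ for $|u|$ is \emph{also} a good competitor for $u$ itself, with the loss captured precisely by $\mathcal E[u_-]$, and that the threshold separating the two cases is exactly the one that makes both estimates close. A convenient way to make this rigorous is to split on whether $\mathcal E[u_-]\le \tfrac14 C_{\rm BE}^{\rm pos}\,\inf_g\mathcal E[|u|-g]$ or not: in the first subcase $u-g_0$ differs from $|u|-g_0$ by a term whose energy is a small fraction of the deficit, so the constant degrades from $C_{\rm BE}^{\rm pos}$ by at most a factor $\tfrac12$; in the second subcase $\mathcal E[u_-]$ is itself a definite fraction of the deficit, and a short computation (isolating the support of $u_-$, applying Sobolev there, and using sub-additivity of $t\mapsto t^{2/Q}$) shows the deficit is at least $(1-2^{-2/Q})$ times $\inf_g\mathcal E[u-g]$. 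I would then simply take the minimum.
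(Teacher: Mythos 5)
Your overall strategy is close in spirit to the paper's, but two steps do not hold up as written, and the paper's actual argument shows how to avoid both. First, your Case 1 conclusion ``the deficit itself is comparable to $\mathcal E[u]$, i.e.\ $\mathcal E[u]-S\|u\|_{2^*}^2\ge(1-2^{-2/Q})\mathcal E[u]$'' is false: taking $u$ equal to an extremal $h$ minus a small perturbation with a nonzero negative part makes the deficit $O(\epsilon^2)$ while $\mathcal E[u]$ stays of order one, and such $u$ can perfectly well satisfy your Case 1 hypothesis $\mathcal E[u_-]\ge\tfrac12\,\mathcal E[u-g_0]$. What is true --- and is the actual crux --- is the superadditivity estimate
\begin{equation*}
\mathcal E[u]-S\|u\|_{2^*}^2\ \ge\ \Bigl(\mathcal E[u_+]-S\|u_+\|_{2^*}^2\Bigr)+\bigl(2-2^{1-\frac 2Q}\bigr)\,\mathcal E[u_-],
\end{equation*}
obtained by combining the disjoint-support identities $\mathcal E[u]=\mathcal E[u_+]+\mathcal E[u_-]$, $\|u\|_{2^*}^{2^*}=\|u_+\|_{2^*}^{2^*}+\|u_-\|_{2^*}^{2^*}$ with the concavity gain $(1-m)^{2/2^*}+m^{2/2^*}-1\ge(2-2^{1-2/Q})\,m^{2/2^*}$ for $m=\|u_-\|_{2^*}^{2^*}\le\tfrac12$ and then Sobolev for $u_-$. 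You list these ingredients but never assemble them into the inequality with $\mathcal E[u_-]$ (rather than $\mathcal E[u]$) on the right, and without it Case 1 does not close. Second, your Case 2 bookkeeping cannot reach the constant $\tfrac12 C_{\rm BE}^{\rm pos}$: from $u-g_0=(|u|-g_0)-2u_-$ you get $\mathcal E[u-g_0]\le 2\mathcal E[|u|-g_0]+8\mathcal E[u_-]$, so with any positive threshold $\mathcal E[u_-]\le\epsilon\inf_g\mathcal E[|u|-g]$ the resulting constant is $C_{\rm BE}^{\rm pos}/(2+8\epsilon)<\tfrac12 C_{\rm BE}^{\rm pos}$; you flag this yourself as the main obstacle, and it is not resolved by the threshold you propose.

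The paper's proof needs no dichotomy at all. After the superadditivity display above, it applies $C_{\rm BE}^{\rm pos}$ to the nonnegative function $u_+$ (not to $|u|$), picks $g_+$ optimal for $u_+$, and recombines using only $\mathcal E[a]+\mathcal E[b]\ge\tfrac12\,\mathcal E[a-b]$ with $a=u_+-g_+$, $b=u_-$, so that
\begin{equation*}
C_{\rm BE}^{\rm pos}\,\mathcal E[u_+-g_+]+\bigl(2-2^{1-\frac 2Q}\bigr)\mathcal E[u_-]\ \ge\ \tfrac12\min\bigl\{C_{\rm BE}^{\rm pos},\,2-2^{1-\frac 2Q}\bigr\}\,\inf_{g\in\mathcal M_\ast}\mathcal E[u-g],
\end{equation*}
which yields exactly $\min\{\tfrac12 C_{\rm BE}^{\rm pos},\,1-2^{-2/Q}\}$. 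If you want to salvage your route, replace the $|u|$-comparison by the $u_+$-comparison and drop the case split: the term $(2-2^{1-2/Q})\mathcal E[u_-]$ already pays for the negative part in all regimes.
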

\begin{proof}
For $u\in S^{1}(\mathbb{S}^{2n+1})$,
\[
\mathcal{S}_{\ast}(u) \mathop {\inf }\limits_{g\in \mathcal{M}_{\ast}^{rea}}\mathcal{E}[u-g]=\mathcal{E}[u]-|\mathbb{S}^{2n+1}|^{\frac{Q}{2}}(\frac{Q-2}{4})^2\|u\|_{L^{2^*}( \mathbb{S}^{2n+1})}^{2}.
\]
By homogeneity, we can assume that $\|u\|_{L^{2^*}(\mathbb{H}^n)}=1$.
Let $u_\pm$ denote the positive and negative parts of $u$,
there naturally hold that
$$\mathcal{E}[u]
=\mathcal{E}[u_+]+\mathcal{E}[u_-] \ \ {\rm and} \ \
\|u\|_{L^{2^*}(\mathbb{S}^{2n+1})}^{2^*}=\| u_+\|_{L^{2^*}(\mathbb{S}^{2n+1})}^{2^*}+\|u_-\|_{L^{2^*}(\mathbb{S}^{2n+1})}^{2^*}.$$
Thus, we deduce that
\[
\mathcal{S}_{\ast}(u_+)\mathop {\inf }\limits_{g\in \mathcal{M}_{\ast}}\mathcal{E}[u_+-g]=\mathcal{E}[u_+]
-|\mathbb{S}^{2n+1}|^{\frac{Q}{2}}(\frac{Q-2}{4})^2
\|u_+\|_{L^{2^*}(\mathbb{S}^{2n+1})}^{2^*}
\]
and
\[
\mathcal{S}_{\ast}(u_-) \mathop {\inf }\limits_{g\in \mathcal{M}_{\ast}}\mathcal{E}[u_--g]=\mathcal{E}[u_-]-|\mathbb{S}^{2n+1}|^{\frac{Q}{2}}(\frac{Q-2}{4})^2\|u_-\|_{L^{2^*}(\mathbb{S}^{2n+1})}^{2^*}.
\]
To simplify the notation,  set
\[
m :=\|u_-\|_{L^{2^*}(\mathbb{S}^{2n+1})}^{2^*},
\]
and assume (without loss of generality) that
\begin{equation}\label{m:onehalf}
m\in[0,1/2]\,.
\end{equation}
It yields that
\begin{align*}
\mathcal{S}_{\ast}(u) \mathop {\inf }\limits_{g\in \mathcal{M}_{\ast}}\mathcal{E}[u-g]
&\geq
\mathcal{S}_{\ast}(u_+) \mathop {\inf }\limits_{g\in \mathcal{M}_{\ast}}\mathcal{E}[u_+-g]+
\mathcal{S}_{\ast}(u_-) \mathop {\inf }\limits_{g\in \mathcal{M}_{\ast}}\mathcal{E}[u_--g]\\
&\ \ +|\mathbb{S}^{2n+1}|^{\frac{Q}{2}}(\frac{Q-2}{4})^2((1-m)^\frac{2}{2^*}+m^{\frac{2}{2^*}}-1).
\end{align*}
Combining  the computation of  Proposition 3.10 in \cite{DEFFL} and the expression of $\mathcal{S}_{\ast}(u_-)$,
these give that
\begin{align*}
&\mathcal{S}_{\ast}(u) \mathop {\inf }\limits_{g\in \mathcal{M}_{\ast}}\mathcal{E}[u-g]\geq
\mathcal{S}_{\ast}(u_+) \mathop {\inf }\limits_{g\in \mathcal{M}_{\ast}}\mathcal{E}[u_+-g]+(2-2^{1-\frac{2}{Q}})\mathcal{E}[u_-].
\end{align*}
 Hence, if $g_+\in\mathcal M_\ast$ is optimal for $u_+$, we have
\begin{align*}
\mathcal{S}_{\ast}(f)\mathop {\inf }\limits_{g\in \mathcal{M}_{\ast}}\mathcal{E}[u-g]&\geq
C_{\rm BE}^{\rm pos} \mathcal{E}[u_+-g_+]+(2-2^{1-\frac{2}{Q}})\mathcal{E}[u_-]\\
&\geq\min \{C_{\rm BE}^{\rm pos},\ (2-2^{1-\frac{2}{Q}})\}\left(
\mathcal{E}[u_+-g_+]+\mathcal{E}[u_-]\right)\\
&\geq \frac 12 \min \{C_{\rm BE}^{\rm pos},\ (2-2^{1-\frac{2}{Q}})\}\mathcal{E}[u_+-g_+].
\end{align*}
As a consequence,
\begin{align*}
C_{\rm BE} &\geq \frac 12 \min \{C_{\rm BE}^{\rm pos},\ (2-2^{1-\frac{2}{Q}})\}= \min \{\frac 12 C_{\rm BE}^{\rm pos},\ (1-2^{-\frac{2}{Q}})\}.
\end{align*}

\end{proof}
This establishes the quantitative relation between the local stability of Sobolev inequality in CR sphere $\mathbb{S}^{2n+1}$ for non-negative functions and the global stability of Sobolev inequality in CR sphere $\mathbb{S}^{2n+1}$ for general real-valued functions. In Section 3, we will establish the local stability of Sobolev inequality on the CR sphere $\mathbb{S}^{2n+1}$ with the optimal dimension-dependent constants. This together with Proposition \ref{Prop:BE} yields the stability of Sobolev inequalities on the Heisenberg group with the optimal dimension-dependent constants.

\section{Local stability for nonnegative functions}\label{sec:2}

Our goal in this section is to prove Lemma \ref{local sta}, i.e., a quantitative stability inequality for nonnegative functions close to the manifold of optimizers. We adopted the method  similar to that introduced in  \cite{DEFFL}, where  the authors there  obtain the optimal local stability on the sphere while we prove the
analogues on the CR sphere $\mathbb{S}^{2n+1}$. For the sake of completeness, we state the details. In order to simplify the notation, we write
$$
q = 2^* = \frac{2Q}{Q-2},
\qquad
\theta=q-2=\frac{4}{Q-2}.
$$
By conformal and scaling invariance of the stability of Sobolev inequality on the CR sphere $\mathbb{S}^{2n+1}$, we may assume $f=1+r\geq 0$, and $r$ is
orthogonal with spherical harmonics with degree $0$ and $1$.  In this section, we denote by $d\xi$ the uniform probability measure on $\mathbb{S}^{2n+1}$ and it is easy to check that the following lemma implies Lemma \ref{local sta}.

\begin{lemma}\label{unifboundclose}
There are explicit constants $\epsilon_0>0$ and $\tilde\delta\in(0,1)$ such that for all $n\geq 2$ and for all $-1 \leq r\in \mathrm S^1(\mathbb{S}^{2n+1})$ satisfying
\begin{equation}
\label{eq:smallsphere}
\left( \int_{\mathbb{S}^{2n+1}}{|r|^q}d\xi \right)^{2/q} \leq\tilde\delta
\end{equation}
and
\begin{equation}
\label{eq:orthosphere}
\int_{\mathbb{S}^{2n+1}}rd\xi= \int_{\mathbb{S}^{2n+1}}{\zeta_j\,r}d\xi=\int_{\mathbb{S}^{2n+1}}{\overline{\zeta_j}\,rd\xi}=0 \
\quad j=1,\ldots,n+1\,,
\end{equation}
one has
$$
\mathcal E [1+r]   -(\frac{Q-2}{4})^2\left( \int_{\mathbb{S}^{2n+1}}{|1+r|^q}d\xi\right)^{\frac{2}{q}} \geq \theta \epsilon_0  \mathcal E[r].
$$
\end{lemma}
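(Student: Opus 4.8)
The plan is to expand both sides of the inequality around the constant function $1$ and track the powers of $r$ carefully, mimicking the Euclidean argument of Dolbeault--Esteban--Figalli--Frank--Loss but working with the spectral decomposition of $\mathcal{L}$ (or rather of $\mathcal{L}'$) on bispherical harmonics $\mathcal H_{j,k}$. First I would record the quadratic expansion of the left-hand side. Writing $\mathcal E[1+r] = \mathcal E[1] + 2\,\mathrm{Re}\langle 1, r\rangle_{\mathcal E} + \mathcal E[r]$, the orthogonality conditions \eqref{eq:orthosphere} kill the cross term (since $1 \in \mathcal H_{0,0}$ and the first-order harmonics $\zeta_j,\overline{\zeta_j}$ span $\mathcal H_{1,0}\oplus\mathcal H_{0,1}$), so $\mathcal E[1+r] = \mathcal E[1] + \mathcal E[r]$ exactly. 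For the nonlinear term I would use the pointwise inequality, valid for $-1\le r$ and $q = 2^* \in (2,3]$ when $n\ge 2$,
$$
|1+r|^q \le 1 + q r + \tfrac{q(q-1)}{2} r^2 + C_q |r|^q,
$$
with an explicit constant $C_q$ that stays bounded (indeed $C_q = O(1)$) as $Q\to\infty$; integrating and using $\int r\,d\xi = 0$ gives
$$
\left(\int |1+r|^q\,d\xi\right)^{2/q} \le \left(1 + \tfrac{q(q-1)}{2}\int r^2\,d\xi + C_q\int|r|^q\,d\xi\right)^{2/q}.
$$
Then I would apply the elementary bound $(1+x)^{2/q} \le 1 + \tfrac{2}{q} x$ for $x\ge 0$ to linearize the outer exponent.

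Next I would combine these. After multiplying through by $(\tfrac{Q-2}{4})^2$ and using $\mathcal E[1] = (\tfrac{Q-2}{4})^2$ (the constant $1$ is an optimizer, so the deficit vanishes there), the constant terms cancel and we are left with a lower bound of the shape
$$
\mathcal E[1+r] - (\tfrac{Q-2}{4})^2\Big(\int|1+r|^q\Big)^{2/q} \;\ge\; \mathcal E[r] - (q-1)(\tfrac{Q-2}{4})^2 \int r^2\,d\xi \;-\; \tfrac{2C_q}{q}(\tfrac{Q-2}{4})^2\int |r|^q\,d\xi.
$$
Using $q-1 = \tfrac{Q+2}{Q-2}$ one checks $(q-1)(\tfrac{Q-2}{4})^2 = \tfrac{(Q-2)(Q+2)}{16} = \lambda_{1,1} - (\tfrac{Q-2}{4})^2$ up to the relevant normalization, so the quantity $\mathcal E[r] - (q-1)(\tfrac{Q-2}{4})^2\int r^2$ is precisely controlled by the spectral gap: since $r$ is orthogonal to $\mathcal H_{0,0}$, $\mathcal H_{1,0}$, $\mathcal H_{0,1}$, every component of $r$ lives in some $\mathcal H_{j,k}$ with $j+k\ge 2$, hence $\langle \mathcal L r, r\rangle \ge \lambda_{1,1}\int |r|^2$ and one obtains
$$
\mathcal E[r] - (q-1)(\tfrac{Q-2}{4})^2\int r^2\,d\xi \;\ge\; \kappa\,\theta\,\mathcal E[r]
$$
for an explicit $\kappa>0$ that does not degenerate (the eigenvalue ratio $\lambda_{1,1}/\lambda_{j,k}$ times the appropriate factor of $\theta = \tfrac{4}{Q-2}$ produces exactly the order-$\theta$ gain, just as in the sphere case). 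This is the first main ingredient, and the bispherical-harmonic bookkeeping for $\lambda_{j,k}$ is where the CR computations diverge from the Euclidean ones.

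The remaining obstacle — and the genuinely delicate part — is absorbing the error term $\int|r|^q\,d\xi$ into the positive gain $\kappa\theta\,\mathcal E[r]$. Under only the smallness hypothesis \eqref{eq:smallsphere}, $\|r\|_q^2 \le \tilde\delta$, one cannot directly bound $\|r\|_q^q$ by $\mathcal E[r]$ because $q > 2$: a naive Sobolev estimate gives $\|r\|_q^2 \lesssim Q\,\mathcal E[r]$, which would eat the $1/Q$ we are trying to produce. This is exactly the issue handled in \cite{DEFFL} by the ``cutting at various heights'' technique: one splits $r = r_{\le} + r_{>}$ according to whether $|r|$ is below or above a fixed threshold, estimates the low part by $\|r_\le\|_q^q \le \|r_\le\|_\infty^{q-2}\|r_\le\|_2^2 \lesssim \|r_\le\|_2^2$ with a dimension-free constant, and controls the high part $\{|r|>1\}$ — which has small measure by \eqref{eq:smallsphere} and Chebyshev — using the sharp Sobolev inequality on $\mathbb S^{2n+1}$ together with the fact that the measure of the superlevel set is $O(\tilde\delta^{q/2})$, so the Sobolev constant's $Q$-dependence is beaten by the small measure factor raised to a positive power. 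I would carry this out verbatim in the CR setting: the only inputs are the sharp inequality \eqref{eqsobq}, Hölder, and Chebyshev, none of which requires rearrangement. Choosing $\tilde\delta$ and $\epsilon_0$ small enough at the end (explicitly, in terms of $C_q$, $\kappa$, and the Sobolev constant scaling) closes the estimate and yields $\mathcal E[1+r] - (\tfrac{Q-2}{4})^2(\int|1+r|^q)^{2/q} \ge \theta\epsilon_0\,\mathcal E[r]$. I expect the high-part estimate, and in particular verifying that the relevant power of $|\{|r|>1\}|$ defeats the $Q$-growth of the Sobolev constant on $\mathbb S^{2n+1}$, to be the step requiring the most care.
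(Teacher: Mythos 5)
Your overall skeleton matches the paper's: expand $\mathcal E[1+r]=\mathcal E[1]+\mathcal E[r]$ via the orthogonality conditions, expand the nonlinear term pointwise, match the quadratic term $\tfrac{q(q-1)}{2}\int r^2$ against the spectral gap on $\bigoplus_{j+k\ge2}\mathcal H_{j,k}$ (the relevant eigenvalue is $\lambda_{2,0}=\tfrac{Q-2}{4}(\tfrac{Q-2}{4}+2)$, not $\lambda_{1,1}$, but both give the same order), and observe that the surplus is exactly of size $\theta\,\mathcal E[r]$. That part is sound. The gap is in your absorption of the error term. You propose a single global pointwise bound $|1+r|^q\le 1+qr+\tfrac{q(q-1)}{2}r^2+C_q|r|^q$ with $C_q=O(1)$ (and indeed $C_q\ge 1$ is forced by letting $r\to\infty$), and then split only the \emph{integral} $\int|r|^q$ into low and high parts. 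For the low part you bound $\|r_{\le}\|_q^q\lesssim\|r_{\le}\|_2^2$ with a dimension-free constant. But then the resulting error is $\tfrac{2C_q}{q}(\tfrac{Q-2}{4})^2\|r_{\le}\|_2^2$ with $C_q=O(1)$, which is of the \emph{same order as the main quadratic term}, whereas the spectral gap only leaves you a margin of $\theta(\tfrac{Q-2}{4})^2\|r\|_2^2=\tfrac{Q-2}{4}\|r\|_2^2$. You are short by a factor of order $Q$, and no choice of a fixed universal $\tilde\delta$ repairs this (the alternative bound $(\tfrac{Q-2}{4})^2\|r\|_q^q\le\tilde\delta^{\theta/2}\mathcal E[r]$ fails too, since $\tilde\delta^{\theta/2}\to1$ as $Q\to\infty$).

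The fix, which is what the paper actually does following \cite{DEFFL}, is that the cutting must happen inside the \emph{pointwise} inequality, and it must be a three-part cut $r=r_1+r_2+r_3$ at heights $\gamma$ and $M$ (Lemma 3.3 of the paper): on the lowest piece the coefficient of $r_1^2$ beyond $\tfrac{q(q-1)}{2}$ is $2\gamma\theta$, i.e.\ explicitly $O(\theta)$, so it fits inside the spectral-gap margin; on the top piece the coefficient of $r_3^q$ is $1+\epsilon\theta$, which is absorbed by applying the Sobolev inequality to $r_3$ alone using the room $1-\tfrac2q=\tfrac\theta q$ (no small-measure argument is needed there, contrary to your sketch). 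The piece your two-part split has no counterpart for is the middle range $r_2$, where the error coefficient is $C_{\gamma,\epsilon,M}\theta$ with a large constant: absorbing it requires an \emph{improved} spectral gap, obtained by projecting $r_2$ onto bispherical harmonics of degree $<K+J$ and showing these projections are small via the reverse H\"older bound $\|Y_{k,j}\|_{L^4}\le 3^{(k+j)/2}$ together with the small measure of $\{r>\gamma\}$; this is where the smallness of $\tilde\delta$ actually enters. A further point your outline skips: once you cut, the piece $r_1$ is no longer orthogonal to $\mathcal H_{0,0}\oplus\mathcal H_{1,0}\oplus\mathcal H_{0,1}$ (only $r$ is), so one must estimate $(r_1,Y)=-(r_2+r_3,Y)$ for low-degree harmonics $Y$ using Chebyshev on $\{r_1\ge\gamma\}$ before the spectral gap can be invoked; in the CR setting this requires the fourth-moment computation for $a\cdot\zeta$ and $b\cdot\overline\zeta$ carried out in Section 3.1. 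As written, your argument does not close.
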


Now let us prove lemma~\ref{unifboundclose} and we  consider $n\geq 2$ which implies $q\in(2,3]$.
Thus, in this section we will assume that $r$ satisfies the assumptions of Theorem~\ref{unifboundclose}. For any $r\geq -1$, define $r_1$, $r_2$ and $r_3$ by
\be{r1r2r3}
r_1:=\min\{r,\gamma\}\,,\quad r_2:=\min\{(r-\gamma)_+,M-\gamma\}\quad\mbox{and}\quad r_3:=(r-M)_+
\ee
where $\gamma$ and $M$ are parameters such that $0<\gamma<M$.
We need the following lemma
\begin{lemma}[\cite{DEFFL}]\label{Cor:ptw}
Given $\epsilon>0$, $M>0$, and $\gamma\in(0,M/2)$, there is a constant $C_{\gamma,\epsilon,M}>0$ with the following property: if $2\leq q\leq 3$, $r\in [-1,\infty)$, then
\begin{multline}\label{Claim:CuttingEstim}
(1+r)^q - 1 - q\,r \leq \(\tfrac12\,q\,(q-1) + 2\,\gamma\,\theta\) r_1^2 +\(\tfrac12\,q\,(q-1) + C_{\gamma,\epsilon,M}\,\theta\)r_2^2\\
+ 2\,r_1\,r_2 + 2\,(r_1+r_2)\,r_3 + (1+\epsilon\,\theta)\,r_3^q.
\end{multline}
\end{lemma}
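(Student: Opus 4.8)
The plan is to treat this as a one–variable pointwise inequality in $r$, with $q\in[2,3]$ and the cut parameters $\gamma,M,\epsilon$ fixed, and to verify it region by region. Write $g(r):=(1+r)^q-1-qr$ for the left-hand side; then $g(0)=g'(0)=0$ and $g''(r)=q(q-1)(1+r)^{q-2}\ge0$, so $g$ is convex, and recall $\theta=q-2\in[0,1]$. The three pieces satisfy $r=r_1+r_2+r_3$ for every $r\ge-1$, and on each of the intervals $[-1,\gamma]$, $[\gamma,M]$, $[M,\infty)$ only one piece is being switched on, so I would establish the bound separately on these three intervals and carry it across the breakpoints $\gamma$ and $M$ by a Taylor expansion with integral remainder anchored at each breakpoint. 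The cross terms $2r_1r_2$ and $2(r_1+r_2)r_3$ will appear exactly as the prescribed linear-transition coefficients at $\gamma$ and at $M$.

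On $[-1,\gamma]$ one has $r_2=r_3=0$ and $r_1=r$, so the claim reduces to $g(r)\le(\tfrac12q(q-1)+2\gamma\theta)r^2$. Put $\phi(r):=g(r)-\tfrac12q(q-1)r^2$, so $\phi(0)=\phi'(0)=0$ and $\phi''(r)=q(q-1)[(1+r)^{\theta}-1]$. For $r\in[-1,0]$ one has $\phi''\le0$, hence $\phi\le0$; for $r\in[0,\gamma]$ the concavity bound $(1+r)^{\theta}-1\le\theta r$ gives $\phi''(r)\le q(q-1)\theta r$, and integrating twice from $0$ yields $\phi(r)\le\tfrac16q(q-1)\theta r^3\le\theta\gamma r^2\le 2\gamma\theta r^2$, using $q(q-1)\le6$ and $r\le\gamma$. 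Here the extra power of $\gamma$ gained from the triple (rather than double) integration is exactly what turns the naive $O(\theta)$ correction into the required $2\gamma\theta$.

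On $[\gamma,M]$ set $r=\gamma+t$ with $t=r_2\in[0,M-\gamma]$ and expand $g(\gamma+t)=g(\gamma)+g'(\gamma)\,t+\int_0^t(t-\tau)\,g''(\gamma+\tau)\,d\tau$. The target is $(\tfrac12q(q-1)+2\gamma\theta)\gamma^2+2\gamma t+(\tfrac12q(q-1)+C\theta)t^2$, matched term by term: the constant is controlled by the $[-1,\gamma]$ bound at $r=\gamma$, which leaves a slack of order $\theta\gamma^3$; the linear excess $g'(\gamma)-2\gamma$ is bounded by $O(\theta\gamma)$ via $(1+\gamma)^{q-1}\le(1+\gamma)(1+\theta\gamma)$ together with $q\le3$; and the quadratic remainder exceeds $\tfrac12q(q-1)t^2$ by at most $O(\theta M\,t^2)$, again from the concavity bound for $(1+\gamma+\tau)^{\theta}$. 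Collecting these, the inequality reduces to $-\gamma^3+K_1t+(K_2-C)t^2\le0$ on $[0,M-\gamma]$ with $K_1,K_2$ explicit in $\gamma,M$, which holds once $C\ge K_2+K_1^2/(4\gamma^3)$; the negative constant $-\gamma^3$ (the slack) is what defeats the linear excess near $t=0$. Thus already this region forces $C$ to depend on $\gamma$ and $M$.

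On $[M,\infty)$ set $r=M+s$ with $s=r_3\ge0$, so $r_1=\gamma$, $r_2=M-\gamma$ and the target is $P+2Ms+(1+\epsilon\theta)s^q$, where $P=P_0+C\theta(M-\gamma)^2$ and $P_0$ is the $C$-free part. The claim is then equivalent to $\sup_{s\ge0}\bigl[g(M+s)-2Ms-(1+\epsilon\theta)s^q\bigr]-P_0\le C\theta(M-\gamma)^2$, and the whole difficulty is to show that the left quantity is $O(\theta)$ \emph{uniformly} in $q\in[2,3]$. The natural anchor is $\theta=0$: at $q=2$ the bracket equals $M^2$ for all $s$ and $P_0=M^2$, so the difference is exactly $0$. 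The favorable margin $\epsilon\theta s^q$ then keeps the supremum finite with a bounded maximizer $s^\ast$, and differentiating in $\theta$ (envelope theorem) at $s^\ast$ gives a finite first-order coefficient $K_3=K_3(\gamma,\epsilon,M)$, whence the difference is $\le K_3\theta$ and one takes $C\ge K_3/(M-\gamma)^2$. I expect this last step to be the main obstacle: since $g'(M)\ge 2M$, the super-$2M$ slope of $g$, the sub-$q$-power margin $\epsilon\theta s^q$, and the $\theta$-degeneration of all corrections compete, and a single $C_{\gamma,\epsilon,M}$ valid for every $q\in[2,3]$ comes only from the envelope/monotonicity-in-$\theta$ argument anchored at $q=2$; crude pointwise bounds on $s^q$ would fail to be uniform near $s=0$, and the degeneration of the margin as $\epsilon\to0$ is precisely what forces $C$ to depend on $\epsilon$.
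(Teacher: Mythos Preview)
The paper does not give a proof of this lemma; it is quoted from \cite{DEFFL} and used as a black box, so there is no in-paper argument to compare against. Your region-by-region strategy, Taylor expanding at the breakpoints $\gamma$ and $M$ and carrying the accumulated slack forward, is exactly the natural route and matches in outline what is done in \cite{DEFFL}. Your treatment of $[-1,\gamma]$ and of $[\gamma,M]$ is essentially complete and correct: the cubic bound $\phi(r)\le\tfrac16q(q-1)\theta r^3\le\theta\gamma r^2$ on $[0,\gamma]$, the resulting slack $\theta\gamma^3$ at $r=\gamma$, and its absorption of the $O(\theta)$ linear excess by completing the square are precisely what is needed.

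On $[M,\infty)$ your anchoring at $\theta=0$, where the claimed inequality degenerates to the identity $F(0,s)\equiv0$ for all $s\ge0$, is the right idea, but the envelope-theorem justification is not sound as written. At $\theta=0$ \emph{every} $s\ge0$ is a maximizer, so there is no well-defined $s^\ast$ at which to evaluate $\partial_\theta F$, and for small $\theta>0$ you have not established that $s^\ast(\theta)$ stays bounded (the margin $\epsilon\theta s^q$ degenerates at the same rate as the deficit it must control). The clean fix bypasses the maximizer entirely: since $F(0,s)\equiv0$, write
\[
F(\theta,s)=\int_0^\theta \partial_{\theta'}F(\theta',s)\,d\theta'
\]
and show $\partial_{\theta'}F(\theta',s)\le K_3(\gamma,\epsilon,M)$ uniformly on $[0,1]\times[0,\infty)$; then $\sup_{s\ge0}F(\theta,s)\le K_3\theta$ and one takes $C\ge K_3/(M-\gamma)^2$. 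This uniform bound does hold, because the term $-\epsilon s^q$ arising from $\partial_\theta\bigl[-(1+\epsilon\theta)s^q\bigr]$ dominates the growth
\[
(1+M+s)^q\log(1+M+s)-(1+\epsilon\theta)s^q\log s \;=\; O\bigl(s^{q-1}\log s\bigr)
\]
at infinity, uniformly in $q\in[2,3]$; this is exactly where $C_{\gamma,\epsilon,M}$ acquires its dependence on $\epsilon$, consistent with your final remark.
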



We split the deficit
$$\mathcal E [1+r]   -(\frac{Q-2}{4})^2\left( \int_{\mathbb{S}^{2n+1}}{|1+r|^q}d\xi\right)^{\frac{2}{q}}$$ into three parts.

Given two parameters $\epsilon_1$, $\epsilon_2>0$, we apply Lemma \ref{Cor:ptw} with
\be{epsilon1-epsilon2-C}
\gamma=\frac{\epsilon_1}2\,,\quad\epsilon=\epsilon_2\quad\mbox{and}\quad C_{\gamma,\epsilon,M}=C_{\epsilon_1,\epsilon_2}\,.
\ee
In terms of these parameters, we decompose $r=r_1+r_2+r_3$.
By the orthogonality conditions \eqref{eq:orthosphere}, we get
$$
\mathcal E_0[r] = \mathcal E_0[r_1] +\mathcal E_0[r_2] + \mathcal E_0[r_3].
$$
Since $r$ has mean zero, it follows from that
$$
(1+r,1+r) = 1 +(r,r),
$$
 where the inner product symbol $(\cdot,\cdot)$ denotes $\|\cdot \|_{L^2({\mathbb{S}^{2n+1}})}^2$.
Moreover,
$$
(r,r)= (r_1,r_1) +(r_2,r_2) + (r_3,r_3)+ 2[\left(r_1, r_2\right)+\left(r_2, r_3\right)+\left(r_1, r_3\right)].
$$
According to  Lemma \ref{Cor:ptw} and using again the fact that $r$ has mean zero, we have
\begin{align*}
\int_{\mathbb{S}^{2n+1}}{|1+r|^q} d\xi&
 \leq 1 +\(\tfrac12\,q\,(q-1) + \epsilon_1\,\theta\)  (r_1,r_1) +\(\tfrac12\,q\,(q-1) + C_{\epsilon_1,\epsilon_2}\,\theta\) (r_2,r_2)\\
& \quad + 2[\left(r_1, r_2\right)+\left(r_2, r_3\right)+\left(r_1, r_3\right)]+ (1+\epsilon_2\,\theta) \int_{\mathbb{S}^{2n+1}}{|r_3|^q}d\xi.
\end{align*}
Using $(1+x)^{2/q}\leq 1 + \tfrac2q\,x$ and $q\geq 2$, we obtain
\begin{align*}
\left( \int_{\mathbb{S}^{2n+1}}{|1+r|^q} d\xi \right)^{2/q}
& \leq 1 + (q-1 + \epsilon_1\,\theta)(r_1, r_1)+ (q-1 + C_{\epsilon_1,\epsilon_2}\,\theta) (r_2, r_2)\\
& \quad + 2 [\left(r_1, r_2\right)+\left(r_2, r_3\right)+\left(r_1, r_3\right)]+ \tfrac 2q\,(1+\epsilon_2\,\theta)\int_{\mathbb{S}^{2n+1}}{|r_3|^q} d\xi.
\end{align*}
For any $0<\epsilon_0\leq\theta^{-1}$,
\begin{align*}
\mathcal E[1+r]-(\frac{Q-2}{4})^2\left(\int_{\mathbb{S}^{2n+1}}{|1+r|^q} d\xi\right)^{2/q}
& \geq \theta\,\epsilon_0 \mathcal E[r]
  + (1-\theta\,\epsilon_0) \mathcal E[r_1]-(\frac{Q-2}{4})^2(q-1 + \epsilon_1\,\theta)(r_1,r_1) \\
& \quad + (1-\theta\,\epsilon_0)  \mathcal E[r_2] -(\frac{Q-2}{4})^2(q-1 + C_{\epsilon_1,\epsilon_2}\,\theta) (r_2,r_2)\\
& \quad + (1-\theta\,\epsilon_0)  \mathcal E[r_3] - \tfrac 2q(\frac{Q-2}{4})^2(1+\epsilon_2\,\theta) \int_{\mathbb{S}^{2n+1}}{|r_3|^q} d\xi.
\end{align*}
With another parameter $\sigma_0>0$ ($\sigma_0$ is to be determined) we define
\begin{align*}
I_1 & := (1-\theta\,\epsilon_0)\mathcal E[r_1]  - (\frac{Q-2}{4})^2(q-1 + \epsilon_1\,\theta) (r_1,r_1) +(\frac{Q-2}{4})^2\sigma_0\,\theta \left[(r_2, r_2)+(r_3,r_3)\right]\,,
\\
I_2 & := (1-\theta\,\epsilon_0) \mathcal E[r_2]- (\frac{Q-2}{4})^2\big(q-1 + (\sigma_0 + C_{\epsilon_1,\epsilon_2})\,\theta\big) (r_2,r_2)\,,
\\
I_3 & := (1-\theta\,\epsilon_0) \mathcal E[r_3] - \tfrac 2q (\frac{Q-2}{4})^2 (1+\epsilon_2\,\theta) \int_{\mathbb{S}^{2n+1}}{|r_3|^q}d\xi -(\frac{Q-2}{4})^2\sigma_0\,\theta (r_3,r_3)\,.
\end{align*}
By direct computations, we conclude
\begin{align*}
I_1 & = (1-\theta\,\epsilon_0) \mathcal E_0[r_1]-\frac{Q-2}{4}(1 + \epsilon_0 + \epsilon_1 )  (r_1,r_1) + \frac{Q-2}{4}\sigma_0 \left[(r_2, r_2)+(r_3,r_3)\right]\,,
\\
I_2 & = (1-\theta\,\epsilon_0) \mathcal E_0[r_2] - \frac{Q-2}{4}(1 + \epsilon_0 + \sigma_0 + C_{\epsilon_1,\epsilon_2})(r_2,r_2)\,.
\end{align*}
To summarize, we have
\begin{align*}
\mathcal E[1+r ]  -   (\frac{Q-2}{4})^2\left(\int_{\mathbb{S}^{2n+1}}{|1+r|^q} d\xi \right)^{2/q} \geq \theta\,\epsilon_0  \mathcal E [r ]   + \sum_{k=1}^3 I_k\,.
\end{align*}
In the following we will show that $I_1$, $I_3$ and $I_2$ are nonnegative.

\subsection{Bound on \texorpdfstring{$I_1$}{I1}}\label{sec:i1}
In this subsection, we will prove $I_1\geq 0$.

\begin{proposition}\label{Prop:estI1}

For any $0<\epsilon_0<\tfrac13$, there is a constant $\overline\sigma_0(\gamma,\epsilon_0,\tilde\delta)>0$ depending explicitly on~$\gamma$, $\epsilon_0$ and $\tilde\delta$ such that for all $n\geq 2$ and all $r\in \mathrm S^1(\mathbb{S}^{2n+1})$ such that $r\ge-1$ and satisfying~\eqref{eq:smallsphere} and~\eqref{eq:orthosphere} as in Lemma~\ref{unifboundclose},
\be{epsilon1}
\epsilon_1=\tfrac12\,(1-3\,\epsilon_0)
\ee
and $\sigma_0\ge\overline\sigma_0(\gamma,\epsilon_0,\tilde\delta)$, one has
$$
I_1\geq 0\,.
$$
\end{proposition}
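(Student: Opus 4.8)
The plan is to show $I_1 \geq 0$ by expanding $I_1$ via the eigenvalue decomposition of $r_1$ into bispherical harmonics on $\mathbb{S}^{2n+1}$, treating the low-degree contribution and the high-degree tail separately. Write $r_1 = \sum_{j,k} P_{j,k}$ with $P_{j,k} \in \mathcal H_{j,k}$, so that $\mathcal E_0[r_1] = \sum_{j,k} \mu_{j,k} \|P_{j,k}\|_2^2$ where $\mu_{j,k} = (\frac{Q-2}{4}+j)(\frac{Q-2}{4}+k) - (\frac{Q-2}{4})^2$ is the eigenvalue of $\mathcal{L}'$ on $\mathcal H_{j,k}$, and $(r_1,r_1) = \sum_{j,k}\|P_{j,k}\|_2^2$. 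Using the explicit form of $I_1$ after the simplification carried out in the excerpt, the task reduces to bounding a quadratic expression in $\|P_{j,k}\|_2^2$, $(r_2,r_2)$, $(r_3,r_3)$ from below by zero, given the smallness \eqref{eq:smallsphere} and the orthogonality \eqref{eq:orthosphere}.

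The key point is the spectral gap: since $r$ (and hence $r_1$ up to the truncation interaction) is orthogonal to $\mathcal H_{0,0}$, $\mathcal H_{1,0}$ and $\mathcal H_{0,1}$, the relevant eigenvalues of $\mathcal{L}'$ acting on the admissible part of $r_1$ are bounded below by $\mu_{1,1} = (\frac{Q-2}{4}+1)^2 - (\frac{Q-2}{4})^2 = \frac{Q}{2}$ and $\mu_{2,0}=\mu_{0,2} = (\frac{Q-2}{4}+2)(\frac{Q-2}{4}) - (\frac{Q-2}{4})^2 = \frac{Q-2}{2}$. However, $r_1 = \min\{r,\gamma\}$ is a nonlinear truncation, so $r_1$ itself need not satisfy the orthogonality conditions; one must control the components of $r_1$ in the forbidden subspaces. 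The standard device (from \cite{DEFFL}) is that $r = r_1 + r_2 + r_3$, and since $r$ is orthogonal to those low-degree harmonics, the projection of $r_1$ onto them equals minus the projection of $r_2 + r_3$, and these are in turn controlled by $\|r_2\|_2$, $\|r_3\|_2$ and ultimately by $(r_2,r_2)+(r_3,r_3)$, which is exactly why the terms $+(\frac{Q-2}{4})^2\sigma_0\theta[(r_2,r_2)+(r_3,r_3)]$ were inserted into $I_1$. So the strategy is: estimate $\mathcal E_0[r_1] = \|L^{1/2}r_1\|_2^2 - (\frac{Q-2}{4})^2\|r_1\|_2^2$ from below using the spectral gap on the orthogonal complement of the forbidden subspace, pay a penalty for the forbidden-subspace components of $r_1$, absorb that penalty into the $\sigma_0$-term, and then check that with $\epsilon_1 = \frac12(1-3\epsilon_0)$ the surviving coefficient in front of $(r_1,r_1)$ is nonnegative because $\mu_{1,1}/(\frac{Q-2}{4}) = 2Q/(Q-2) \geq q$ and the gap beats $q-1+\epsilon_1\theta$.

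Concretely, I would first record that on the orthogonal complement of $\mathcal H_{0,0}\oplus\mathcal H_{1,0}\oplus\mathcal H_{0,1}$ one has the operator inequality $\mathcal{L}' \geq \frac{Q-2}{2}\cdot\frac{Q-2}{4}\cdot(\text{something})$; more usefully $\mathcal E_0[\phi] \geq (q-1)(\frac{Q-2}{4})^2\|\phi\|_2^2$ with a genuine extra gap of size $\sim \theta \cdot (\frac{Q-2}{4})^2 \cdot(\text{const})$, since $\lambda_{1,1}/\lambda_{0,0} = (\frac{Q+2}{4})^2/(\frac{Q-2}{4})^2 \geq 1 + \theta$ translates into $\mathcal E_0 \geq \theta(\frac{Q-2}{4})^2\|\cdot\|_2^2$ on that complement — this is the quantitative gap that must outweigh $\epsilon_1\theta(r_1,r_1)$, forcing the choice $\epsilon_1 < \frac12$. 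Then I would write $r_1 = \Pi r_1 + \Pi^\perp r_1$ where $\Pi$ projects onto the forbidden subspace, use $\Pi r_1 = -\Pi(r_2+r_3)$ together with the pointwise bounds $0 \leq r_1 \leq \gamma$, $r_2,r_3 \geq 0$ and \eqref{eq:smallsphere} to bound $\|\Pi r_1\|_2^2 \lesssim \gamma^{?}\tilde\delta^{?}((r_2,r_2)+(r_3,r_3))$ (the interplay of $\gamma$ and $\tilde\delta$ is where the constant $\overline\sigma_0(\gamma,\epsilon_0,\tilde\delta)$ enters), and finally combine these estimates so that choosing $\sigma_0$ at least as large as this explicit $\overline\sigma_0$ makes every term nonnegative. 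The main obstacle is precisely this control of the low-degree components of the truncated function $r_1$: unlike the linear case, the truncation destroys orthogonality, and one needs the quantitative estimate relating $\|\Pi r_1\|_2$ to $\|r_2\|_2 + \|r_3\|_2$ with constants that are uniform in $Q$ (or depend on $Q$ only through harmless factors), so that the final lower bound $\frac{\beta_0}{Q}$ survives; handling the measure being a probability measure on $\mathbb{S}^{2n+1}$ and the bispherical harmonic dimensions growing with $n$ must be done carefully but follows the template of \cite{DEFFL}.
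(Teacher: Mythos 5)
Your outline reproduces the paper's argument: the paper likewise splits $r_1$ into its projection $\tilde r_1$ onto $\bigoplus_{j+k\ge 2}\mathcal H_{j,k}$ plus the forbidden low-degree components, uses the spectral gap $\mathcal E_0[\tilde r_1]\ge\frac{Q-2}{2}(\tilde r_1,\tilde r_1)$ coming from $\lambda_{2,0}$, controls $(r_1,Y)=-(r_2+r_3,Y)$ for $Y\in\{1,\zeta_j,\overline{\zeta_j}\}$ via $L^4$-bounds on these harmonics together with the Chebyshev estimate $\mu(\{r_2+r_3>0\})\le\gamma^{-2}(r_1,r_1)$ and the smallness \eqref{eq:smallsphere}, and absorbs the result into the $\sigma_0$-term to get the explicit $\overline\sigma_0$ in \eqref{sigma0}. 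The only slip is cosmetic: the gap on the complement is $2\theta(\tfrac{Q-2}{4})^2$, not $\theta(\tfrac{Q-2}{4})^2$, and it is precisely this factor $2$ that makes the coefficient of $(\tilde r_1,\tilde r_1)$ equal to $\tfrac{Q-2}{4}\epsilon_1>0$ with the stated choice of $\epsilon_1$.
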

Notice that $\theta=q-2\le1$ with $q=2Q/(Q-2)$ means $n\geq2$. An expression of $\overline\sigma_0$ is given below in~\eqref{sigma0}.
\begin{proof}
 Let $\tilde r_1$ be the orthogonal projection of $r_1$ onto the space of spherical harmonics of degree $k+j\geq 2$, that is,
$$
\tilde r_1 = r_1 - \int_{\mathbb{S}^{2n+1}}{r_1}d\xi-\frac{Q}{2}\sum_{j=1}^{n+1}\left(r_1, \zeta_j\right)\zeta_j-\frac{Q}{2}\sum_{j=1}^{n+1}\left(r_1, \overline{\zeta_j}\right)\overline{\zeta_j},
$$
where $\{\sqrt{Q/2}\zeta_j\}_{j=1}^{n+1}$ is a family of $L^2$ normalized orthogonal basis of space $\mathcal{H}_{1,0}$ and
$\{\sqrt{Q/2}\bar\zeta_j\}_{j=1}^{n+1}$ is a  family of $L^2$ normalized orthogonal basis of space $\mathcal{H}_{0,1}$.
Then
\begin{align*}
I_1 &
= (1-\theta\,\epsilon_0) \mathcal E_0[\widetilde{r}_1]- \frac{Q-2}{4}(1 + \epsilon_0 + \epsilon_1 ) (\widetilde{r}_1,\widetilde{ r}_1)+\frac{Q-2}{4}\sigma_0 \left[(r_2, r_2)+(r_3,r_3)\right] \\
&\ \ \  - \frac{Q-2}{4}(1 + \epsilon_0 + \epsilon_1 ) \left( \int_{\mathbb{S}^{2n+1}}{r_1}d\xi \right)^2-\frac{(Q-2)}{4}\frac{Q}{2}\big(
(1+\theta)\,\epsilon_0 + \epsilon_1 \big) \sum_{j=1}^{n+1}\left(|\left(r_1, \zeta_j\right)|^2 + |\left(r_1, \overline{\zeta_j}\right)|^2\right).
\end{align*}
Since $\widetilde{r}_1\in \bigoplus_{j+k\geq 2} \mathcal H_{j,k}$, we can write $\widetilde{r}_1=\sum\limits _{j+k\geq 2}^{\infty} Y_{j,k}$ and $Y_{j,k}\in H_{j,k}$. Then it holds
\begin{equation}\begin{split}
\mathcal E_0[\widetilde{r}_1]&=\frac12 \int_{\Sph^{2n+1}} \left( \sum_{j=1}^{n+1} \left( |T_j \widetilde{r}_1|^2 + |\overline{T_j} \widetilde{r}_1|^2 \right)\right)d\xi\\
&=\left(\mathcal{L}'(\widetilde{r}_1), \widetilde{r}_1\right)=\left(\mathcal{L}(\widetilde{r}_1), \widetilde{r}_1\right)-\frac{n^2}{4}(\widetilde{r}_1,\widetilde{ r}_1)\\
&=\left(\mathcal{L}\left(\sum\limits _{j+k\geq 2}^{\infty} Y_{j,k}\right), \sum\limits _{l+m\geq 2}^{\infty} Y_{l,m}\right)-\frac{n^2}{4}(\widetilde{r}_1, \widetilde{r}_1)\\
&=\sum\limits _{j+k\geq 2}^{\infty}\lambda_{j,k}\left(Y_{j,k},Y_{j,k}\right)-\frac{n^2}{4}(\widetilde{r}_1, \widetilde{r}_1)\\
&\geq \lambda_{2,0}\sum\limits _{j+k\geq 2}^{\infty}\left(Y_{j,k},Y_{j,k}\right)-\frac{n^2}{4}(\widetilde{r}_1, \widetilde{r}_1)\\
&=\frac{Q-2}{2}(\widetilde{r}_1,\widetilde{ r}_1).
\end{split}\end{equation}
Combining the above estimates, we derive that
\begin{equation}\begin{split}
& I_1\geq \big(\frac{Q-2}{2}(1-\theta\,\epsilon_0) - \frac{Q-2}{4}(1 + \epsilon_0 + \epsilon_1 ) \big) (\widetilde{r}_1,\widetilde{ r}_1) + \frac{Q-2}{4}\sigma_0 \left[(r_2, r_2)+(r_3,r_3)\right]\\
& \ \ \  - \frac{Q-2}{4}(1 + \epsilon_0 + \epsilon_1 ) \left( \int_{\mathbb{S}^{2n+1}}{r_1} d\xi \right)^2
- \frac{(Q-2)}{4}\frac{Q}{2}\big(
(1+\theta)\,\epsilon_0 + \epsilon_1 \big) \sum_{j=1}^{n+1}\left(|\left(r_1, \zeta_j\right)|^2 + |\left(r_1, \overline{\zeta_j}\right)|^2\right).
\end{split}\end{equation}

By the definition \eqref{epsilon1} of   $\epsilon_1$ and for any $\epsilon_0<\tfrac13$, using $\theta\le1$ ($n\geq 2$), we get
\begin{equation}
\label{eq:i1boundmain}
\frac{Q-2}{2}(1-\theta\,\epsilon_0) - \frac{Q-2}{4}(1 + \epsilon_0 + \epsilon_1 ) \ge \frac {Q-2}{8}(1 - 3 \epsilon_0)  = \frac {Q-2}{4}\epsilon_1>0.
\end{equation}
Combining
$$
(\widetilde{r}_1,\widetilde{ r}_1)=(r_1,r_1)-\left( \int_{\mathbb{S}^{2n+1}}{r_1} d\xi \right)^2 -\frac{Q}{2} \sum_{j=1}^{n+1}\left(|\left(r_1, \zeta_j\right)|^2 + |\left(r_1, \overline{\zeta_j}\right)|^2\right),
$$
it is easy to obtain that
\begin{align*}
 I_1\geq& \frac{Q-2}{4}\epsilon_1(\widetilde{r}_1,\widetilde{ r}_1)+\frac{Q-2}{4}\sigma_0 \left[(r_2, r_2)+(r_3,r_3)\right]-\frac{Q-2}{4}(1 + \epsilon_0 + \epsilon_1 ) \left( \int_{\mathbb{S}^{2n+1}}{r_1}  d\xi \right)^2\\
&\hspace*{3cm}
-\frac{(Q-2)}{4}\frac{Q}{2}\big(
(1+\theta)\,\epsilon_0 + \epsilon_1 \big) \sum_{j=1}^{n+1}\left(|\left(r_1, \zeta_j\right)|^2 + |\left(r_1, \overline{\zeta_j}\right)|^2\right)\\
\geq& \frac{Q-2}{4}\epsilon_1(r_1, r_1)+\frac{Q-2}{4}\sigma_0 \left[(r_2, r_2)+(r_3,r_3)\right]-\frac{Q-2}{4}(1 + \epsilon_0+2\epsilon_1) \left(\int_{\mathbb{S}^{2n+1}}{r_1} d\xi \right)^2\\
&\hspace*{3cm}
-\frac{(Q-2)Q}{4}(\epsilon_0+\epsilon_1)\sum_{j=1}^{n+1}\left(|\left(r_1, \zeta_j\right)|^2 + |\left(r_1, \overline{\zeta_j}\right)|^2\right).
\end{align*}

Let~$Y$ be one of the functions $1$, $a\cdot  \zeta$ and $b \cdot \bar{\zeta}$ with $a,b\in\C^{n+1}$.
Since $r$ is orthogonal to $\mathcal{H}_{j,k}$ for $j+k\leq 1$, hence $\left(r,Y\right)=0$.
Then it follows that
$$ |\left(r_1, Y\right)|^2=|\left(r_2+r_3, Y\right)|^2\leq\| Y\|_{L^4({\mathbb{S}^{2n+1}})}^2  \mu\big(\{r_2+r_3>0\}\big)^{1/2}(r_2+r_3,r_2+r_3).
$$
Since $\{r_2+r_3>0\}\subset\{r_1\geq\gamma\}$, there holds
$$
\mu(\{r_2+r_3>0\}) \leq \mu(\{r_1\geq \gamma\}) \leq \frac1{\gamma^2} (r_1, r_1).
$$
Thus we have
\begin{equation}\label{Y}
|\left(r_1, Y\right)|^2 \leq \| Y\|_{L^4({\mathbb{S}^{2n+1}})}^2 \frac{\sqrt{2 \tilde{\delta}}}\gamma (r_1,r_1)^{\frac{1}{2}}\((r_2, r_2)+(r_3,r_3)\)^{\frac{1}{2}}
\end{equation}
If $Y=1$, then clearly $\| Y\|_{L^4({\mathbb{S}^{2n+1}})} =1$ and~\eqref{Y} gives
$$
\left(\int_{\mathbb{S}^{2n+1}}{r_1} d\xi \right)^2 \leq \frac{\sqrt{2\,\tilde\delta}}\gamma (r_1,r_1)^{\frac{1}{2}}\left[(r_2, r_2)+(r_3,r_3)\right]^{\frac{1}{2}}\,.
$$
If $Y=a\cdot\zeta$, we write $a=\left(a_1+ib_1, a_2+i b_2,\cdot\cdot\cdot, a_{n+1}+ib_{n+1}\right)$ and $\zeta=\left(x_1+iy_1, x_2+iy_2,\cdot\cdot\cdot,x_{n+1}+iy_{n+1}\right)$.
Then $$\left(a\cdot\zeta\right)=\left(a_1x_1-b_1y_1+i(a_1y_1+b_1x_1), \cdot\cdot\cdot, a_{n+1}x_{n+1}-b_{n+1}y_{n+1}+i(a_{n+1}y_{n+1}+b_{n+1}x_{n+1})\right)$$
and
\begin{equation}\begin{split}
|\left(a\cdot\zeta\right)|^2&=\sum_{j=1}^{n+1}\left(a_j x_j-b_j y_j\right)^2+\sum_{j=1}^{n+1}\left(a_{j}y_{j}+b_{j}x_{j}\right)^2\\
&=\sum_{j=1}^{n+1}\left((a_jx_j)^2+(b_jy_j)^2+(a_jy_j)^2+(b_jx_j)^2\right)\\
&=|\left(a_1,\cdot\cdot\cdot, a_{n+1},b_1,\cdot\cdot\cdot, b_{n+1}\right)\cdot \left(x_1,\cdot\cdot\cdot, x_{n+1}, y_1,\cdot\cdot\cdot, y_{n+1}\right)|^2\\
&\ \ +|\left(a_1,\cdot\cdot\cdot, a_{n+1},b_1,\cdot\cdot\cdot, b_{n+1}\right)\cdot \left(y_1,\cdot\cdot\cdot, y_{n+1}, x_1,\cdot\cdot\cdot, x_{n+1}\right)|^2\\
\end{split}\end{equation}
From \cite{DEFFL}, we know
$$\int_{\mathbb{S}^{2n+1}}|\left(a_1,\cdot\cdot\cdot, a_{n+1},b_1,\cdot\cdot\cdot, b_{n+1}\right)\cdot \left(x_1,\cdot\cdot\cdot, x_{n+1}, y_1,\cdot\cdot\cdot, y_{n+1}\right)|^4 d\xi \leq \frac{3|a|^4}{(2n+2)^2}$$
and $$\int_{\mathbb{S}^{2n+1}}|\left(a_1,\cdot\cdot\cdot, a_{n+1},b_1,\cdot\cdot\cdot, b_{n+1}\right)\cdot \left(y_1,\cdot\cdot\cdot, y_{n+1}, x_1,\cdot\cdot\cdot, x_{n+1}\right)|^4 d\xi\leq \frac{3|a|^4}{(2n+2)^2}.$$
Hence it follows that
$$\left(a\cdot\zeta,a\cdot\zeta\right)^2\leq 2\left(\frac{3|a|^4}{(2n+2)^2}+\frac{3|a|^4}{(2n+2)^2}\right)=\frac{12|a|^4}{Q^2}.$$
Similarly, we also have
$$\int_{\mathbb{S}^{2n+1}}|a\cdot\overline{\zeta}|^4 d\xi
\leq \frac{12|a|^4}{Q^2}.$$

Combining ~\eqref{Y} and $a=\left((r_1,\zeta_1),\cdot\cdot\cdot, (r_1, \zeta_{n+1})\right)$, we obtain
\begin{equation}\begin{split}
\frac{Q}{2}|a|^2=\frac{Q}{2|a|^2}\left(r_1, a\cdot\zeta\right)^2
&\leq \frac{Q}{2|a|^2} \|a\cdot \zeta\|_{L^4(\mathbb{S}^{2n+1})}^2\frac{\sqrt{2 \tilde{\delta}}}\gamma(r_1,r_1)^{\frac{1}{2}}\left((r_2, r_2)+(r_3,r_3)\right)^{\frac{1}{2}}\\
&=\sqrt{3}\frac{\sqrt{2\,\tilde\delta}}\gamma (r_1,r_1)^{\frac{1}{2}}\left[(r_2, r_2)+(r_3,r_3)\right]^{\frac{1}{2}} .
\end{split}\end{equation}
Similarly, if we let $$b=\left((r_1,\overline{\zeta_1}),\cdot\cdot\cdot, (r_1, \overline{\zeta_{n+1}})\right),$$
then $$\frac{Q}{2}|b|^2=\frac{Q}{2|b|^2}\left(r_1, b\cdot\overline{\zeta}\right)^2\le \sqrt{3}\frac{\sqrt{2\,\tilde\delta}}\gamma (r_1,r_1)^{\frac{1}{2}}\left[(r_2, r_2)+(r_3,r_3)\right]^{\frac{1}{2}}.$$

\begin{align*}
 I_1
 \geq&
 \frac{Q-2}{4}\epsilon_1(r_1,r_1)+\frac{Q-2}{4}\sigma_0 \left[(r_2, r_2)+(r_3,r_3)\right]
 \\
&\hspace*{2.6cm} -\frac{Q-2}{4}(1 + \epsilon_0+2\epsilon_1)\frac{\sqrt{2\,\tilde\delta}}\gamma (r_1,r_1)^{\frac{1}{2}}\left[(r_2, r_2)+(r_3,r_3)\right]^{\frac{1}{2}}
 \\
&\hspace*{2.6cm}
-\frac{Q-2}{2}( \epsilon_0+  \epsilon_1)\frac{2\sqrt{3}\sqrt{2\tilde\delta}}\gamma
(r_1,r_1)^{\frac{1}{2}}\left[(r_2, r_2)+(r_3,r_3)\right]^{\frac{1}{2}}\\
\geq &\left(\frac{Q-2}{2}\sqrt{\epsilon_1\sigma_0}-\frac{Q-2}{4}(1+(2+4\sqrt{3})(\epsilon_0+ \epsilon_1))\frac{\sqrt{2\tilde\delta}}\gamma \right)(r_1,r_1)^{\frac{1}{2}}\left[(r_2, r_2)+(r_3,r_3)\right]^{\frac{1}{2}}\geq 0
\end{align*}
if any $\epsilon_0< \frac{1}{3} $ and $\sigma_0$ satisfies $\sigma_0\ge\overline\sigma_0(\gamma,\epsilon_0,\tilde\delta)$ with
\be{sigma0}
\overline\sigma_0(\gamma,\epsilon_0,\delta):=\frac{1}{2\epsilon_1}(1+(2+4\sqrt{3})(\epsilon_0+ \epsilon_1))^2 \frac{\delta}{\gamma^2}.
\ee
Then we finish the proof of $I_1\ge0$.
\end{proof}



\subsection{Bound on \texorpdfstring{$I_3$}{I3}}

The idea for bounding this term is to use the Sobolev inequality. The extra coefficient $2\leq q\leq3$ gives us enough room to accomodate all error terms.
\begin{proposition}\label{Prop:estI2} Assume that $\tilde\delta\in(0,1)$ and $0<\epsilon_0<\frac13$. With
\be{epsilon2}
\epsilon_2 := \frac14\,(1-3\,\epsilon_0) {\rm \ \ and} \ \    \sigma_0=\frac2q\,\epsilon_2 ,
\ee
 for all $n\geq 2$, all $\tilde\delta\leq1$ and all $r$ as in Theorem~\ref{unifboundclose}, one has
$$
I_3\geq 0\,.
$$
\end{proposition}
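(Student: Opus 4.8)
The plan is to bound $I_3$ from below using only the sharp Sobolev inequality \eqref{eqsobq} applied to $r_3$ — in the normalization of this section it reads $\mathcal E[u]\ge(\tfrac{Q-2}{4})^2\big(\int_{\mathbb{S}^{2n+1}}|u|^q\,d\xi\big)^{2/q}$, with $d\xi$ the uniform probability measure — together with the splitting $\mathcal E[u]=\mathcal E_0[u]+(\tfrac{Q-2}{4})^2\|u\|_{L^2(\mathbb{S}^{2n+1})}^2$ (recall $\tfrac{n^2}{4}=(\tfrac{Q-2}{4})^2$). The point of the particular choices $\epsilon_2=\tfrac14(1-3\epsilon_0)$ and $\sigma_0=\tfrac2q\epsilon_2$ is that they collapse the whole estimate to a scalar inequality which is true \emph{precisely} because $q=2Q/(Q-2)\le 3$, i.e.\ because $n\ge 2$; this is exactly where the extra room from $q\le3$ mentioned before the statement is spent.

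Concretely, I would first record that $r_3=(r-M)_+\in S^1(\mathbb{S}^{2n+1})$ (truncation does not increase the subelliptic Dirichlet energy $\mathcal E_0$) and $|r_3|\le|r|$ pointwise, so
$$A:=\int_{\mathbb{S}^{2n+1}}|r_3|^q\,d\xi\ \le\ \int_{\mathbb{S}^{2n+1}}|r|^q\,d\xi\ \le\ \tilde\delta^{q/2}\ \le\ 1$$
by \eqref{eq:smallsphere} and $\tilde\delta\le1$; when $A=0$ the claim is trivial, so assume $0<A\le1$. Regrouping the leading coefficient as $1-\theta\epsilon_0=(1-\theta\epsilon_0-\sigma_0\theta)+\sigma_0\theta$ and using $\mathcal E[r_3]=\mathcal E_0[r_3]+(\tfrac{Q-2}{4})^2\|r_3\|_{L^2(\mathbb{S}^{2n+1})}^2$, the last ($L^2$) term of $I_3$ is absorbed and
$$I_3=(1-\theta\epsilon_0-\sigma_0\theta)\,\mathcal E[r_3]+\sigma_0\theta\,\mathcal E_0[r_3]-\tfrac2q\Big(\tfrac{Q-2}{4}\Big)^2(1+\epsilon_2\theta)\,A.$$
Since $\theta\le1$, $\tfrac2q<1$ and $\epsilon_0<\tfrac13$ one checks $\theta(\epsilon_0+\sigma_0)\le\epsilon_0+\epsilon_2=\tfrac14+\tfrac14\epsilon_0<\tfrac13$, so $1-\theta\epsilon_0-\sigma_0\theta>0$; dropping $\mathcal E_0[r_3]\ge0$ and applying the normalized Sobolev inequality to $\mathcal E[r_3]$ gives
$$I_3\ \ge\ \Big(\tfrac{Q-2}{4}\Big)^2\Big[(1-\theta\epsilon_0-\sigma_0\theta)\,A^{2/q}-\tfrac2q(1+\epsilon_2\theta)\,A\Big].$$

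It then remains to show the bracket is nonnegative. As $0<A\le1$ and $2/q<1$ we have $A^{2/q}\ge A$, so it suffices to prove the purely numerical inequality $1-\theta\epsilon_0-\sigma_0\theta\ge\tfrac2q(1+\epsilon_2\theta)$. Using $1-\tfrac2q=\tfrac{q-2}{q}=\tfrac\theta q$ and $\sigma_0=\tfrac2q\epsilon_2$ this rearranges to $\tfrac1q\ge\epsilon_0+\tfrac4q\epsilon_2$, and multiplying by $q$ and inserting $4\epsilon_2=1-3\epsilon_0$ turns it into $0\ge(q-3)\epsilon_0$, which holds because $n\ge2$ forces $q=2+\tfrac2n\le3$. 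This yields $I_3\ge0$. The computation is essentially routine; the only real obstacle is bookkeeping — keeping track of the three $(\tfrac{Q-2}{4})^2$-weighted pieces and remembering to invoke the $d\xi$-normalized Sobolev inequality rather than the surface-measure form of \eqref{eqsobq}. The one structural choice is the coupling $\sigma_0=\tfrac2q\epsilon_2$: it is exactly what reduces the estimate to the clean condition $q\le3$, whereas an uncoordinated choice of $\epsilon_2,\sigma_0$ would leave an $O(\theta^2)$ remainder to be dominated separately.
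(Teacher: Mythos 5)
Your proof is correct and follows essentially the same route as the paper: both arguments use $\|r_3\|_{L^q}\le 1$ (from \eqref{eq:smallsphere} and $\tilde\delta\le1$) to compare the $L^q$-power terms, apply the Sobolev inequality \eqref{eqsobq} to $\mathcal E[r_3]$, and reduce to the same scalar inequality $\tfrac1q\ge\epsilon_0+\tfrac4q\epsilon_2$, which holds precisely because $q\le3$. The only (immaterial) difference is that the paper controls the $(r_3,r_3)$ term by H\"older's inequality, $(r_3,r_3)\le\|r_3\|_{L^q}^2$, whereas you absorb it into the zeroth-order part of $\mathcal E[r_3]$.
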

\begin{proof} Taking into account the choice for $\sigma_0$, we have
$$
I_3=(1-\theta\,\epsilon_0)  \mathcal{E}[r_3] - \tfrac 2q (\frac{Q-2}{4})^2\((1+\epsilon_2\,\theta) \int_{\mathbb{S}^{2n+1}}{|r_3|^q}d\xi + \epsilon_2\,\theta (r_3,r_3)\)$$
We have $\|r_3\|^q_{L^q({\mathbb{S}^{2n+1}})} \leq\|r_3\|^2_{L^q({\mathbb{S}^{2n+1}})} $
 and $(r_3,r_3)\leq \|r_3\|^2_{L^q({\mathbb{S}^{2n+1}})}$ by H\"older's inequality. Thus,  we obtain
\begin{align*}
I_3 & \geq (1-\theta\,\epsilon_0) \mathcal{E} [r_3]  -\tfrac2q(\frac{Q-2}{4})^2 (1+2\,\epsilon_2\,\theta) \left( \int_{\mathbb{S}^{2n+1}}{|r_3|^q}d\xi \right)^{2/q}\\
&\geq(1-\theta\,\epsilon_0-\tfrac2q(1+2\,\epsilon_2\,\theta))\mathcal{E} [r_3] \\
& \geq \frac\theta q\,\(1-q\,\epsilon_0-4\,\epsilon_2\) \mathcal{E}[r_3]\ge0.
\end{align*}
where
using  Sobolev inequality \eqref{eqsobq} in the second inequality.
\end{proof}


\subsection{Bound on \texorpdfstring{$I_2$}{I2}}

At this point in the proof, for given $0<\epsilon_0<\tfrac13$, we have fixed the parameters $\epsilon_1$ and $\epsilon_2$ and we have found a $\delta_3$ such that $I_1$, $I_3\geq 0$ under the assumption $\tilde\delta\leq\delta_3$. Here we show that, by further decreasing $\tilde\delta$ if necessary, we can ensure that $I_3\geq 0$. The idea to achieve this is to use that $r_2$ satisfies an improved spectral gap inequality.
\begin{proposition}\label{Prop:estI3}
For any $0<\epsilon_0<\frac13$, let $\sigma_0=\frac2q\,\epsilon_2$. Then there is a $\delta_2\in(0,1)$ such that, for all $n\geq 2$, all $\tilde\delta\leq\delta_2$ and all~$r$ as in Theorem~\ref{unifboundclose}, one has
$$
I_2\geq 0\,.
$$
\end{proposition}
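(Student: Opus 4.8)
The plan is to exploit that $r_2$ is supported on the set $\{r_1 \geq \gamma\} = \{r > \gamma\}$, which by the smallness hypothesis~\eqref{eq:smallsphere} and Chebyshev has small measure, together with the fact that $r_2$ itself carries no low-frequency component once we subtract its projection onto $\mathcal H_{j,k}$ with $j+k\leq 1$. Concretely, let $\widetilde r_2$ denote the orthogonal projection of $r_2$ onto $\bigoplus_{j+k\geq 2}\mathcal H_{j,k}$, exactly as in the treatment of $I_1$. The spectral gap computation already carried out for $\widetilde r_1$ applies verbatim: since $\lambda_{2,0} = \tfrac{Q-2}{2}\cdot\tfrac{Q+2}{4}\cdot\tfrac{4}{Q+2}$... more precisely using $\lambda_{2,0}$ from~\eqref{eig} one gets $\mathcal E_0[\widetilde r_2] \geq \tfrac{Q-2}{2}(\widetilde r_2,\widetilde r_2)$, hence $\mathcal E[\widetilde r_2] = \mathcal E_0[\widetilde r_2] + \tfrac{n^2}{2}(\widetilde r_2,\widetilde r_2)$ gives room beyond the critical Sobolev constant $(\tfrac{Q-2}{4})^2 = \tfrac{(Q-2)^2}{16}$ applied to $(q-1)$ times the $L^2$ norm.

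First I would write $I_2 = (1-\theta\epsilon_0)\mathcal E_0[r_2] - \tfrac{Q-2}{4}\bigl(1+\epsilon_0+\sigma_0+C_{\epsilon_1,\epsilon_2}\bigr)(r_2,r_2)$ using the simplification already displayed for $I_2$ in the excerpt. Then I would pass from $r_2$ to $\widetilde r_2$, picking up error terms $\bigl(\int r_2\,d\xi\bigr)^2$ and $\sum_j\bigl(|(r_2,\zeta_j)|^2 + |(r_2,\overline{\zeta_j})|^2\bigr)$, each of which, because $r = r_1+r_2+r_3$ is orthogonal to $\mathcal H_{j,k}$ for $j+k\leq 1$, equals the corresponding inner product of $-(r_1+r_3)$ against the harmonic $Y$. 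Using Cauchy–Schwarz, the $L^4$ bounds on $1$, $a\cdot\zeta$, $b\cdot\overline\zeta$ from the $I_1$ argument, and the measure estimate $\mu(\{r_2 > 0\}) \leq \mu(\{r_1\geq\gamma\}) \leq \gamma^{-2}(r_1,r_1) \leq \gamma^{-2}\sqrt{2\tilde\delta}\,\|r_1\|_{L^q}$, these error terms are bounded by $C(\epsilon_0,\gamma)\sqrt{\tilde\delta}$ times $\|r_2\|_{L^2}\,\|r_1+r_3\|_{L^2}$, which is absorbed for $\tilde\delta$ small. The key inequality is then that, for the projected piece, $(1-\theta\epsilon_0)\tfrac{Q-2}{2} - \tfrac{Q-2}{4}(1+\epsilon_0+\sigma_0+C_{\epsilon_1,\epsilon_2}) > 0$ \emph{fails} in general because $C_{\epsilon_1,\epsilon_2}$ is large — so the real mechanism must be different: one uses that $r_2 \leq M-\gamma$ pointwise and is supported on a small set, so $(r_2,r_2) \leq \|r_2\|_{L^\infty}^{2}\mu(\{r_2>0\})$ is \emph{itself} controlled by $\sqrt{\tilde\delta}$ times $\mathcal E[r_2]$ via Sobolev; that is, $(r_2,r_2) \leq \|r_2\|_{L^q}^2\,\mu(\{r_2>0\})^{1-2/q} \leq \bigl(\text{Sobolev const}\bigr)\mathcal E[r_2]\cdot(C\sqrt{\tilde\delta})^{\theta}$, and since $C_{\epsilon_1,\epsilon_2}\theta$ multiplies $(r_2,r_2)$, the whole deficit $\tfrac{Q-2}{4}C_{\epsilon_1,\epsilon_2}\theta(r_2,r_2)$ is bounded by a small multiple of $\mathcal E[r_2]$.

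So the ordered steps are: (1) restate $I_2$ via the simplified formula; (2) use $\mu(\{r_2>0\})\leq\gamma^{-2}(r_1,r_1)\leq\gamma^{-2}\sqrt{2\tilde\delta}$ (from~\eqref{eq:smallsphere}); (3) bound $(r_2,r_2)\leq\|r_2\|_{L^q}^2\mu(\{r_2>0\})^{\theta/q}\leq \bigl(\tfrac{4}{Q-2}\bigr)^2 |\mathbb S^{2n+1}|^{-\,\cdots}\mathcal E[r_2]\,\mu(\{r_2>0\})^{\theta/q}$ by Hölder and the Sobolev inequality~\eqref{eqsobq}; (4) conclude that $\tfrac{Q-2}{4}(1+\epsilon_0+\sigma_0+C_{\epsilon_1,\epsilon_2})(r_2,r_2) \leq (1-\theta\epsilon_0)\mathcal E_0[r_2] + \tfrac{n^2}{2}(1-\theta\epsilon_0)(r_2,r_2)$ once $\tilde\delta\leq\delta_2$, choosing $\delta_2$ explicitly so that $\mu(\{r_2>0\})^{\theta/q}$ kills the large constant $C_{\epsilon_1,\epsilon_2}$. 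The main obstacle I anticipate is step (4): one must check that the smallness gained is of the \emph{right order in $Q$} — the factor $C_{\epsilon_1,\epsilon_2}\theta \sim C_{\epsilon_1,\epsilon_2}/Q$ already has a favorable $1/Q$, but the Sobolev constant and the $n^2/2$ term both scale like $Q^2$, so one has to track these powers carefully and verify that $\delta_2$ can be chosen \emph{independent of $n$}, which is exactly what Lemma~\ref{unifboundclose} demands. A secondary subtlety is that $r_2$ need not have mean zero, so the passage to $\widetilde r_2$ genuinely produces the three error families above, and one must confirm each is $O(\sqrt{\tilde\delta})$ uniformly in $n$ using the $L^4$ estimates $\|a\cdot\zeta\|_{L^4}^2, \|a\cdot\overline\zeta\|_{L^4}^2 \leq 2\sqrt{3}\,|a|^2/Q$ recorded above.
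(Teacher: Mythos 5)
Your diagnosis of the difficulty is exactly right: after the simplification $I_2=(1-\theta\epsilon_0)\,\mathcal E_0[r_2]-\tfrac{Q-2}{4}\bigl(1+\epsilon_0+\sigma_0+C_{\epsilon_1,\epsilon_2}\bigr)(r_2,r_2)$, the degree-$\ge2$ spectral gap $\lambda_{2,0}$ only yields the coefficient $\tfrac{Q-2}{2}$ and cannot beat the large constant $C_{\epsilon_1,\epsilon_2}$. But the mechanism you substitute is not the paper's, and as written it has a genuine gap. The paper instead proves an \emph{improved} spectral gap for $r_2$: using the dimension-free reverse H\"older inequality $\|Y_{k,j}\|_{L^4}\le 3^{(k+j)/2}$ of Duoandikoetxea together with $\mu(\{r_2>0\})\le\tilde\delta^{q/2}/\gamma^q$, it shows that the projection of $r_2$ onto all bispherical harmonics of degree $k+j<K+J$ has $L^2$ norm at most $3^{(K+J)/2}\gamma^{-q/4}\tilde\delta^{q/8}\,(r_2,r_2)^{1/2}$, whence $\mathcal E_0[r_2]\ge\tfrac{Q-2}{4}(K+J)\bigl(1-3^{K+J}\gamma^{-q/2}\tilde\delta^{q/4}\bigr)(r_2,r_2)$; choosing $L=K+J\sim 4(1+\epsilon_0+\sigma_0+C_{\epsilon_1,\epsilon_2})/(1-\epsilon_0)$ and $\delta_2=\tfrac12\gamma^2 3^{-2L}$ (both independent of $n$) finishes the proof. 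None of this appears in your plan.

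The gap in your alternative route is in steps (3)--(4). First, the smallness you claim, $(r_2,r_2)\le(\text{Sob})\,\mathcal E[r_2]\,(C\sqrt{\tilde\delta})^{\theta}$, is not uniformly small: since $\theta=4/(Q-2)\to0$ and $\mu^{1-2/q}=\mu^{2/Q}\to1$ as $n\to\infty$, the factor $(C\sqrt{\tilde\delta})^{\theta}$ tends to $1$, so ``the whole deficit is bounded by a small multiple of $\mathcal E[r_2]$'' fails for large $n$ with any fixed $\tilde\delta$. Second, the target inequality in step (4) is wrong: there is no spare term $\tfrac{n^2}{2}(1-\theta\epsilon_0)(r_2,r_2)$ on the right --- the entire mass contribution $(\tfrac{Q-2}{4})^2(r_2,r_2)$ of $\mathcal E[r_2]$ is already consumed in cancelling $(\tfrac{Q-2}{4})^2(q-1)(r_2,r_2)$, which is why $I_2$ reduces to a competition between $\mathcal E_0[r_2]$ alone and $\tfrac{Q-2}{4}A(r_2,r_2)$ with $A=1+\epsilon_0+\sigma_0+C_{\epsilon_1,\epsilon_2}$; proving your step (4) would not give $I_2\ge0$. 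Your idea can in fact be rescued, but only via a self-absorption you never perform: from $(r_2,r_2)\le\mu^{2/Q}\|r_2\|_{L^q}^2$ and Sobolev one gets $(1-\mu^{2/Q})(r_2,r_2)\le\tfrac{16}{(Q-2)^2}\mu^{2/Q}\mathcal E_0[r_2]$, and one must then verify that $\tfrac{4A}{Q-2}\cdot\tfrac{\mu^{2/Q}}{1-\mu^{2/Q}}\le 1-\theta\epsilon_0$ holds for all $Q\ge6$ once $\mu$ is below a threshold independent of $Q$ (the point being that $\tfrac{1}{Q-2}\cdot\tfrac{1}{1-\mu^{2/Q}}\approx\tfrac{1}{2\ln(1/\mu)}$ stays bounded). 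You explicitly list this uniformity in $n$ as ``the main obstacle I anticipate'' rather than resolving it, so the proposal does not constitute a proof.
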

\begin{proof}
Since $Y_{k,j}\in H_{k,j}$ can also be seen as the $k+j$-order spherical harmonics of real $(2n+1)$ dimensional sphere $S^{2n+1}$,  according to~\cite[Theorem~1]{Duoandikoetxea}, for any $p\in[2,\infty)$ and $L^2$-normalized $Y_{k,j}$, there holds
$$
\|Y_{k,j}\|_{L^p({\mathbb{S}^{2n+1}})}
 \leq (p-1)^{\frac{k+j}{2}}\,.
$$
Thus, we can bound
$$
|(Y_{k,j},r_2)| \leq \|Y_{k,j}\|_{L^4({\mathbb{S}^{2n+1}})}\mu\big(\{ r_2 >0\}\big)^\frac14 (r_2,r_2)^{\frac{1}{2}}\leq 3^\frac k2\,\mu\big(\{ r_2 >0\}\big)^\frac14(r_2,r_2)^{\frac{1}{2}}.
$$
Meanwhile,
$$
\mu\big(\{ r_2 >0\}\big) = \mu\big(\{r>\gamma\}\big) \leq \frac1{\gamma^q} \|r\|_{L^q({\mathbb{S}^{2n+1}})} ^q \leq \frac{\tilde\delta^{q/2}}{\gamma^q}\,.
$$
Then
 for any $\mathrm L^2$-normalized spherical harmonic $Y_{k,j}$ of degree $j+k\in\N$, we have
\begin{equation}
\label{eq:projectionr2}
|(Y_{k,j},r_2)|  \leq 3^\frac k2\,\gamma^{-\frac q4}\,\tilde\delta^{\frac q8} (r_2,r_2)^{\frac{1}{2}}.
\end{equation}
If $\pi_{k,j}r_2$ denotes the projection of $r_2$ onto spherical harmonics of degree~$k+j$, namely, if we set $r_2=\sum \limits_{k+j\geq 2}Y_{k,j}$ and $Y_{k,j}=\frac{\pi_{k,j}r_2}{(\pi_{k,j}r_2,\pi_{k,j}r_2)^{\frac{1}{2}}}
$,  it follows
from \eqref{eq:projectionr2} we  get that
$$
(\pi_{k,j}r_2,\pi_{k,j}r_2)^{\frac{1}{2}} \leq 3^\frac k2\,\gamma^{-\frac q4}\,\tilde\delta^\frac q8 (r_2,r_2)^{\frac{1}{2}}.
$$
Next, for any $K,J\in\N$, if $\Pi_{K,J}\,r_2 := \sum \limits_{k<K,j<J} \pi_{k,j}\,r_2$ denotes the projection of $r_2$ onto spherical harmonics of degree less than $J+K$, then
\begin{align*}
(\Pi_{K,J}r_2,\Pi_{K,J}r_2)^{\frac{1}{2}}&= \Big( {\scriptstyle\sum\limits_{k<K,j<J}}(\pi_{k,j}r_2,\pi_{k,j}r_2)\Big)^{\!1/2} \\
&\leq \gamma^{-\frac q4}\,\tilde\delta^\frac q8 (r_2,r_2)^{\frac{1}{2}}\sqrt{{\scriptstyle\sum\limits_{k<K,j<J}} 3^{k+j}} \leq 3^\frac {K+J}2\,\gamma^{-\frac q4}\,\tilde\delta^\frac q8 (r_2,r_2)^{\frac{1}{2}}.
\end{align*}
{}From this we conclude that
\begin{align*}
\mathcal{E}_0[r_2] & \geq \mathcal{E}_0[r_2-\Pi_{K,J} r_2] \\
& \geq\left( \frac{Q-2}{4}(K+J)+KJ \right)
\((1-\Pi_{K,J})r_2,(1-\Pi_{K,J})r_2\)
\\
&\geq  \frac{Q-2}{4}(K+J) \left( (r_2,r_2) -(\Pi_{K,J}r_2,\Pi_{K,J}r_2)\right)\\
& \geq \frac{Q-2}{4}(K+J)\left( 1- 3^{K+j} \gamma^{-\frac q2}\,\tilde\delta^\frac q4 \right)(r_2,r_2).
\end{align*}
Consequently,
$$
I_2 \geq \left( (1-\theta \epsilon_0)\frac{Q-2}{4}(K+J)\left( 1- 3^{K+J}\gamma^{-\frac q2}\,\tilde\delta^\frac q4 \right) - Q \left( 1+ \epsilon_0 + \sigma_0 + C_{\epsilon_1,\epsilon_2} \right) \right)(r_2,r_2).
$$
 If $\tilde\delta\le \delta_2:=\frac12\frac{\gamma^2}{3^{2(K+J)}}$, we have $1- 3^{K+J}\,\gamma^{-\frac q2}\,\tilde\delta^\frac q4\ge\frac12$. Let we choose $L:=K+J\in\N$  such that
\be{L-delta2}
L:= 4 \frac{1+ \epsilon_0 + \sigma_0 + C_{\epsilon_1,\epsilon_2}}{1-\epsilon_0}
\ee
where $\sigma_0$ is given by~\eqref{epsilon2},  we can conclude  $I_2\ge0$.
\end{proof}

Fix some $\epsilon_0\in(0,\frac 13)$ and with the choice
$$
\gamma=\epsilon_2=2\,\epsilon_1=\tfrac14\,(1-3\,\epsilon_0)\quad\mbox{and}\quad\sigma_0=\frac2q\,\epsilon_2,
$$
   and an arbitrary choice of
$$
M\ge2\,\gamma\,,\quad\overline M\ge\sqrt e\quad\mbox{and}\quad\epsilon=\gamma
$$
which determines $C_{\epsilon_1,\epsilon_2}=C_{\gamma,\epsilon,M}$ according to~\eqref{epsilon1-epsilon2-C} on the other hand, the condition
$$
\tilde\delta=\min\big\{\delta_1,\delta_2\big\}
$$
with $\delta_1:=\frac{4\epsilon_1\epsilon_2\gamma^2}{q(1+(2+4\sqrt{3})(\epsilon_0+\epsilon_1))^2}$, we claim that $I_1$, $I_2$ and $I_3$ are nonnegative, which completes the proof of Lemma~\ref{unifboundclose} as $n\geq 2$.

\section{The stability of HLS inequalities by Carlen's dual stability method}

In this section, we will prove the stability of Hardy-Littlewood-Sobolev inequalities on the Heisenberg group when $\lambda=Q-2$ (Theorem \ref{HLS stability})  from the stability of Sobolev inequalities and the dual stability
method introduced by Carlen \cite{Car1}, \cite{Car2}.

For any function $g$ on $\mathbb{S}^{2n+1}$, define the function $f$ on $\mathbb{H}^n$ by $f(u)=|J_{\mathcal{C}}|^{\frac{Q+2}{2Q}}g\circ \mathcal{C}$, where $\mathcal{C}$ be the Cayley transform from $\mathbb{H}^n$ to $\mathbb{S}^{2n+1}$ and $J_{\mathcal{C}}$ be the Jacobian of this transform. Then
$$\|g\|_{L^{\frac{2Q}{Q+2}}(\mathbb{S}^{2n+1})}=\|f\|_{L^{\frac{2Q}{Q+2}}(\mathbb{H}^{n})},$$
and
$$\iint_{\mathbb{H}^n\times \mathbb{H}^n}\frac{f(u)f(v)}{|u^{-1}v|^{Q-2}}du dv=2^{-n(Q-2)/Q}\iint_{\mathbb{S}^{2n+1}\times \mathbb{S}^{2n+1}}\frac{g(\xi)g(\eta)}{|1-\xi\cdot\bar{\eta}|^{\frac{Q-2}{2}}}d\xi d\eta.$$
Then sharp Hardy-Littlewood-Sobolev inequalities on $\mathbb{S}^{2n+1}$ when $\lambda=Q-2$ states (see Theorem 2.2 in \cite{Frank-Lieb})
\begin{equation*}
|\mathbb{S}^{2n+1}|^{(Q-2)/Q}\frac{n!}{\Gamma^2(\frac{Q+2}{4})}\|g\|^2_{L^{\frac{2Q}{Q+2}}(\mathbb{S}^{2n+1})}\geq\iint_{\mathbb{S}^{2n+1}\times \mathbb{S}^{2n+1}}\frac{g(\xi)g(\eta)}{|1-\xi\cdot\bar{\eta}|^{\frac{Q-2}{2}}}d\xi d\eta,
\end{equation*}
with equality if and only if
$$g=\frac{c}{|1-\overline{\eta}\cdot\zeta|^{(Q+2)/2}}$$
for  some $c\in \mathbb{C}$, $\eta=(\eta_1,...,\eta_{n+1})\in \mathbb{C}^{n+1}$ with $|\eta| <1$.

Then we only need to prove the stability of HLS inequalities on the CR sphere $\mathbb{S}^{2n+1}$. In fact, we will prove
\begin{align}\label{hls sta}
& \|g\|^2_{L^{\frac{2Q}{Q+2}}(\mathbb{S}^{2n+1})}-|\mathbb{S}^{2n+1}|^{(2-Q)/Q}\frac{\Gamma^2(\frac{Q+2}{4})}{n!}\iint_{\mathbb{S}^{2n+1}\times \mathbb{S}^{2n+1}}\frac{g(\zeta)g(\eta)}{|1-\zeta\cdot\bar{\eta}|^{(Q-2)/2}}d\zeta d\eta\nonumber\\
& \geq \min{\left\{\frac{4\beta(Q+2)}{Q(Q-2)},1\right\}}\frac{Q+2}{2(Q-2)}\inf_{h\in \mathcal{M}^{\ast}_{HLS}}\|g-h\|^2_{\frac{2Q}{Q+2}},
\end{align}
where $$\mathcal{M}^{\ast}_{HLS}=\left\{\frac{c}{|1-\overline{\eta}\cdot\xi|^{2/q}}:~~c\in \mathbb{R},~~\eta=(\eta_1,...,\eta_{n+1})\in \mathbb{C}^{n+1}\ \text{with}~~|\eta| <1\right\}.$$ Since Carlen's dual stability method is based on the Legendre transform, we first explain the sharp HLS inequality can be deduced by the sharp Sobolev inequality and vice-versa.
Define the operator $A_{2}$ in $L^2(\mathbb{S}^{2n+1})$, which act as multiplication on $\mathcal{H}_{j,k}$ by
$$\lambda_{j,k}= \frac{\Gamma(j+\frac{Q+2}{4})\Gamma(k+\frac{Q+2}{4})}{\Gamma(j+\frac{Q-2}{4})\Gamma(k+\frac{Q-2}{4})}= (\frac{Q-2}{4}+k)(\frac{Q-2}{4}+j),$$
then we can write the sharp Sobolev inequality on $\mathbb{S}^{2n+1}$ by
$$(u,\mathcal{L}u)\geq |\mathbb{S}^{2n+1}|^{\frac{2}{Q}}(\frac{Q-2}{4})^2\left(\int_{\mathbb{S}^{2n+1}}|u|^{\frac{2Q}{Q-2}}d\xi\right)^{\frac{Q-2}{Q}}.$$
Since the eigenvalue of the operator with kernel $|1-\zeta\cdot\bar{\eta}|^{-2\alpha}$($-1<\alpha<(n+1)/2$) on the subspace $\mathcal{H}_{j,k}$ is (see \cite{Frank-Lieb})
$$E_{j,k}=\frac{2\pi^{n+1}\Gamma(n+1-2\alpha)\Gamma(j+\alpha)\Gamma(k+\alpha)}{\Gamma^2(\alpha)\Gamma(j+n+1-\alpha)\Gamma(k+n+1-\alpha)},$$
 it is easy to check that the inverse operator $\mathcal{L}^{-1}$ of $\mathcal{L}$ can be written as
$$\mathcal{L}^{-1}(v)(\zeta)=\frac{\Gamma^2(\frac{Q-2}{4})}{2\pi^{n+1}}\int_{\mathbb{S}^{2n+1}}\frac{v(\eta)}{|1-\zeta\cdot\bar{\eta}|^{\frac{Q-2}{2}}}d\eta.$$
Recall that we consider the real-valued functions and $2^{\ast}=\frac{2Q}{Q-2}$. Denote
$$\langle u,v\rangle : =2\int_{\mathbb{S}^{2n+1}}u(\eta)v(\eta)d\eta,$$
and
$$\mathcal{I}(u) = |\mathbb{S}^{2n+1}|^{\frac{2}{Q}}(\frac{Q-2}{4})^2\|u\|^2_{L^{\frac{2Q}{Q-2}}(\mathbb{S}^{2n+1})}.$$
The Legendre transform $\mathcal{I}^\ast$ of $\mathcal{I}$ is defined by
$$\mathcal{I}^\ast(v)=\sup_{u\in L^{2^{\ast}}(\mathbb{S}^{2n+1})}\left\{\langle u,v\rangle-\mathcal{I}(u)\right\}.$$
It is obviously
$$\mathcal{I}^\ast(v)= |\mathbb{S}^{2n+1}|^{-\frac{2}{Q}}(\frac{Q-2}{4})^{-2}\|v\|^2_{L^{\frac{2Q}{Q+2}}(\mathbb{S}^{2n+1})}.$$
On the other hand, let $\mathcal{F}(u)=(u,\mathcal{L}u)$ and the Legendre transform
$\mathcal{F}^\ast$ of $\mathcal{F}: S^{1}(\mathbb{S}^{2n+1})\rightarrow [0,+\infty)$ defined on $S^{-1}(\mathbb{S}^{2n+1})$ is given by
$$\mathcal{F}^\ast(v)=\sup_{u\in S^{1}(\mathbb{S}^{2n+1})}\left\{\langle u,v\rangle-\mathcal{F}(u)\right\}.$$
A simple calculation gives
$$\mathcal{F}^\ast(v)=(v,\mathcal{L}^{-1}v)=\frac{\Gamma^2(\frac{Q-2}{4})}{2\pi^{n+1}}\iint_{\mathbb{S}^{2n+1}\times \mathbb{S}^{2n+1}}\frac{v(\eta)v(\xi)}{|1-\xi\cdot\bar{\eta}|^{\frac{Q-2}{2}}}d\eta d\xi.$$
Using the fact $|\mathbb{S}^{2n+1}|=\frac{2\pi^{n+1}}{n!}$ and $|\mathbb{S}^{2n+1}|^{-\frac{2}{Q}}(\frac{Q-2}{4})^{-2}\times (\frac{\Gamma^2(\frac{Q-2}{4})}{2\pi^{n+1}})^{-1} =|\mathbb{S}^{2n+1}|^{\frac{Q-2}{Q}}\frac{n!}{\Gamma^2(\frac{Q+2}{4})}$, we can obtain the dual relationship between sharp HLS and Sobolev inequality on $\mathbb{S}^{2n+1}$ by Legendre transform.

Now by the stability of Sobolev inequalities on $\mathbb{S}^{2n+1}$ (Theorem \ref{sob sta sph}) and the Sobolev inequality itself we have
\begin{align*}
& \mathcal{F}(u)-\mathcal{I}(u)=(u,\mathcal{L}u)-|\mathbb{S}^{2n+1}|^{\frac{2}{Q}}(\frac{Q-2}{4})^2\left(\int_{\mathbb{S}^{2n+1}}|u|^{\frac{2Q}{Q-2}}d\xi \right)^{\frac{Q-2}{Q}}\\
& \geq \frac{\beta}{Q}\inf_{h\in \mathcal{M}_{\ast}}(u-h,\mathcal{L}(u-h))\\
& \geq \frac{\beta}{Q}|\mathbb{S}^{2n+1}|^{\frac{2}{Q}}(\frac{Q-2}{4})^2\inf_{h\in \mathcal{M}_{\ast}}\|u-h\|^2_{\frac{2Q}{Q-2}}.
\end{align*}
Since $\mathcal{I}^\ast$ is $\frac{Q+2}{Q-2}$ convex follow from the $(p-1)$-convexity of $v\rightarrow \|v\|^2_p$ when $1<p<2$(see \cite{Car1} for details), then by Theorem 2.4 in
\cite{Car2} we have
\begin{align*}
& |\mathbb{S}^{2n+1}|^{-\frac{2}{Q}}(\frac{Q-2}{4})^{-2}\|v\|^2_{L^{\frac{2Q}{Q+2}}(\mathbb{S}^{2n+1})}-\frac{\Gamma^2(\frac{Q-2}{4})}{2\pi^{n+1}}\iint_{\mathbb{S}^{2n+1}\times \mathbb{S}^{2n+1}}\frac{v(\eta)v(\xi)}{|1-\xi\cdot\bar{\eta}|^{\frac{Q-2}{2}}}d\eta d\xi\\
& =\mathcal{I}^{\ast}(v)-\mathcal{F}^{\ast}(v)\geq \min{\left\{\frac{4\beta(Q+2)}{Q(Q-2)},1\right\}}\frac{Q+2}{2(Q-2)}|\mathbb{S}^{2n+1}|^{-\frac{2}{Q}}(\frac{Q-2}{4})^{-2}\inf_{h\in \mathcal{M}^{\ast}_{HLS}}\|v-h\|^2_{\frac{2Q}{Q+2}}.
\end{align*}
That means
\begin{align*}
& \|v\|^2_{L^{\frac{2Q}{Q+2}}(\mathbb{S}^{2n+1})}-|\mathbb{S}^{2n+1}|^{\frac{2-Q}{Q}}\frac{\Gamma^2(\frac{Q+2}{4})}{n!}\iint_{\mathbb{S}^{2n+1}\times \mathbb{S}^{2n+1}}\frac{v(\eta)v(\xi)}{|1-\xi\cdot\bar{\eta}|^{\frac{Q-2}{2}}}d\xi d\eta\\
& \geq \min{\left\{\frac{4\beta(Q+2)}{Q(Q-2)},1\right\}}\frac{Q+2}{2(Q-2)}\inf_{h\in \mathcal{M}^{\ast}_{HLS}}\|v-h\|^2_{\frac{2Q}{Q+2}},
\end{align*}
which is (\ref{hls sta}) and thus this completes the proof of Theorem \ref{HLS stability}.

\bigskip\begin{center}\rule{2cm}{0.5pt}\end{center}\bigskip

\begin{thebibliography}{99}

\bibitem{BaWeWi} T. Bartsch, T. Weth and M. Willem, \textit{A Sobolev inequality with remainder term and critical equations on domains with topology for the polyharmonic operator}, Calc. Var. Partial
Differential Equations \textbf{18} (2003), 253-268.

\bibitem{Be1997} W. Beckner, {\it Sharp inequalities and geometric manifolds}.  J. Fourier Anal. Appl. \textbf{3} (1997), Special Issue, 825-836.

\bibitem{Be1992} W. Beckner, {\it Sobolev inequalities, the Poisson semigroup, and analysis on the sphere $\mathbb{S}^n$}. Proc. Nat. Acad. Sci. U.S.A. \textbf{89} (1992), no. 11, 4816-4819.

\bibitem{BE}G. Bianchi and H. Egnell, \textit{A note on the Sobolev inequality}, J. Funct. Anal. \textbf{100} (1991), 18-24.



\bibitem{BrLi} H. Brezis and E. Lieb, \textit{Sobolev inequalities with remainder terms}, J. Funct. Anal. \textbf{62} (1985), 73-86.


\bibitem{Car1} E. Carlen,  \textit{Duality and stability for functional inequalities}, Ann. Fac. Sci. Toulouse Math. \textbf{26} (2016), 319-350.


\bibitem{Car2} E. Carlen,  \textit{Stability for the logarithmic Hardy-Littlewood-Sobolev inequality with application to the Keller-Segel equation},
J. Funct. Anal. \textbf{288} (2025), 110818.



\bibitem{CaLo} E. Carlen and M. Loss, \textit{ Extremals of functionals with competing symmetries}, J. Func. Anal. \textbf{88}(1990),  437-456.


\bibitem{CLT} L. Chen, G. Lu and H. Tang, \textit{Sharp stability of log-Sobolev and Moser-Onofri inequalities on  the sphere}. Journal of Functional Analysis. \textbf{285} (2023),  110022.

\bibitem{CLT1} L. Chen, G. Lu and H. Tang, \textit{
Stability of Hardy-Littlewood-Sobolev inequalities with explicit lower bounds}, Adv. Math. \textbf{450} (2024), 109778.

\bibitem{CLT2} L. Chen, G. Lu and H. Tang, \textit{
Optimal asymptotic lower bound for stability of fractional Sobolev inequality and the stability of Log-Sobolev inequality on the sphere}, to appear in Adv. Math, 2025.



\bibitem{CLT3} L. Chen, G. Lu and H. Tang, \textit{
Optimal stability of Hardy-Littlewood-Sobolev and Sobolev inequalities of arbitrary orders with dimension-dependent constants}, (2024) arXiv:2405.17727.

\bibitem{CLTao} L. Chen, G. Lu and C. Tao, \textit{
Existence of extremal functions for the Stein-Weiss inequalities on the Heisenberg group}. J. Funct. Anal. \textbf{277} (2019), 1112-1138.

\bibitem{ChFrWe} S. Chen, R. Frank and T. Weth, \textit{Remainder terms in the fractional Sobolev inequality}, Indiana Univ. Math. J. \textbf{62} (2013),  1381-1397.


\bibitem{DEFFL} J. Dolbeault, M. Esteban, A. Figalli, R. Frank and M. Loss, \textit{Sharp stability for Sobolev and log-Sobolev inequalities, with optimal dimensional dependence}, Camb. J. Math. 13 (2025), no. 2, 359-430.


\bibitem{Duoandikoetxea}J. Duoandikoetxea, \textit{Reverse Holder inequalities for spherical harmonics}, Proc. Amer. Math. Soc. \textbf{101} (1987), 487-491.

\bibitem{Fo} G. B. Folland, \textit{Spherical harmonic expansion of the Poisson-Szeg\H o kernel for the ball}, Proc. Amer. Math. Soc. \textbf{47} (1975), 401-408.

\bibitem{FS} G. Folland and E. Stein, \textit{Estimates for the $\partial H$ complex and analysis on the Heisenberg group}, Comm. Pure Appl. Math. \textbf{27} (2006), 429-522.

\bibitem{Frank-Lieb}
R. Frank and E. Lieb, \textit{Sharp constants in several inequalities on the Heisenberg group}, Annals of Mathematics \textbf{176} (2012), 349-381.


\bibitem{Gross} L. Gross,
Logarithmic Sobolev inequalities.
Amer. J. Math. 97 (1975), no. 4, 1061-1083.


\bibitem{HLZ} X. Han, G. Lu and J. Zhu, \textit{ Hardy-Littlewood-Sobolev and Stein-Weiss inequalities and integral systems on the Heisenberg group}, Nonlinear Anal. \textbf{75} (2012), 4296-4314.


\bibitem{P} P. T. Ho, \textit{The long time existence and convergence of the CR Yamabe flow}, Commun. Contem. Math. \textbf{14} (2012), 112501.

\bibitem{P1} P. T. Ho, \textit{The Webster curvature flow on CR sphere}, Adv. Math. \textbf{268} (2015), 758-835.

\bibitem{JeLe1} D. Jerison, J. M. Lee, \textit{The Yamabe problem on CR manifold}, J. Differential Geometry \textbf{25} (1987), 167-197.

\bibitem{JeLe} D. Jerison, J. M. Lee, \textit{Extremals for the Sobolev inequality on the Heisenberg group and the CR Yamabe problem}, J. Amer. Math. Soc. \textbf{1} (1988), 1-13.

\bibitem{LiZh}H. Liu and A. Zhang,  \textit{Remainder terms for several inequalities on some groups of Heisenberg-type}, Sci. China Math. \textbf{58} (2015), 2565-2580.


\bibitem{Lo} A. Loiudice, \textit{ Improved Sobolev inequalities on the Heisenberg group}, Nonlinear Anal. \textbf{62} (2005), 953-962.
\bibitem{LuWe} G. Lu and J. Wei, \textit{ On a Sobolev inequality with remainder terms}, Proc. Amer. Math. Soc. \textbf{128} (1999), 75-84.






\bibitem{MU}
A. Malchiodi and F. Uguzzoni, A perturbation result for the Webster scalar curvature problem on the CR sphere, J. Math. Pures Appl. 81 (2002) 983-997.


\bibitem{Ye} R. Ye, \textit{Global existence and convergence of Yamabe flow}, J. Differential Geom. \textbf{39} (1994),
  no. 1, 35-50.

\end{thebibliography}
\end{document}